\theoremstyle{plain}
\newtheorem{theorem}{Theorem}
\newtheorem{lemma}{Lemma}
\newtheorem{corollary}{Corollary}
\newtheorem{proposition}{Proposition}
\theoremstyle{remark}
\newtheorem{remark}{Remark}
\newtheorem{definition}{Definition}
\newtheorem{example}{Example}
\begin{document}

\begin{frontmatter}

\title{Provably Safe and Robust Learning-Based Model Predictive Control\thanksref{footnoteinfo}} 

\thanks[footnoteinfo]{Corresponding author A.~Aswani.}

\author[UCB]{Anil Aswani}\ead{aaswani@eecs.berkeley.edu},    
\author[UCB]{Humberto Gonzalez}\ead{hgonzale@eecs.berkeley.edu},               
\author[UCB]{S. Shankar Sastry}\ead{sastry@eecs.berkeley.edu},   
\author[UCB]{Claire Tomlin}\ead{tomlin@eecs.berkeley.edu}  

\address[UCB]{Electrical Engineering and Computer Sciences, Berkeley, CA 94720}  
          
\begin{keyword}                           
Predictive control; statistics; robustness; safety analysis; learning control.               
\end{keyword}                             

\begin{abstract}                          
Controller design faces a trade-off between robustness and performance, and the reliability of linear controllers has caused many practitioners to focus on the former.  However, there is renewed interest in improving system performance to deal with growing energy constraints.  This paper describes a learning-based model predictive control (LBMPC) scheme that provides deterministic guarantees on robustness, while statistical identification tools are used to identify richer models of the system in order to improve performance; the benefits of this framework are that it handles state and input constraints, optimizes system performance with respect to a cost function, and can be designed to use a wide variety of parametric or nonparametric statistical tools.  The main insight of LBMPC is that safety and performance can be decoupled under reasonable conditions in an optimization framework by maintaining two models of the system.  The first is an approximate model with bounds on its uncertainty, and the second model is updated by statistical methods.  LBMPC improves performance by choosing inputs that minimize a cost subject to the learned dynamics, and it ensures safety and robustness by checking whether these same inputs keep the approximate model stable when it is subject to uncertainty.  Furthermore, we show that if the system is sufficiently excited, then the LBMPC control action probabilistically converges to that of an MPC computed using the true dynamics.
\end{abstract}

\end{frontmatter}

\section{Introduction}

Tools from control theory face an inherent trade-off between robustness and performance.  Stability can be derived using approximate models, but optimality requires accurate models.  This has driven research in adaptive \cite{sastry1989,sastryisidori1989,narendra1990,astrom1995,rovithakis1994} and learning-based \cite{xu2003,anderson2007,tedrake2009,abbeel2010,ljung2011} control.  Adaptive control reduces conservatism by modifying controller parameters based on system measurements, and learning-based control improves performance by using measurements to refine models of the system.  However, learning by itself cannot ensure the properties that are important to controller safety and stability \cite{barto2004,aswani2009,aswani2010}.

The motivation of this paper is to design a control scheme than can (a) handle state and input constraints, (b) optimize system performance with respect to a cost function, (c) use statistical identification tools to learn model uncertainties, and (d) provably converge.  The main challenge is combining (a) and (c): Statistical methods converge in a probabilistic sense, and this is not strong enough for the purpose of providing deterministic guarantees of safety.  Showing (d) is also difficult because of the differences between statistical and dynamical convergence.  

We introduce a form of robust, adaptive model predictive control (MPC) that we refer to as learning-based model predictive control (LBMPC).  The main insight of LBMPC is that performance and safety can be decoupled in an MPC framework by using reachability tools \cite{asarin2000,aswani07a,rakovic2006,chutinan99,asarin2003,stursberg2003,mitchell2005}.  In particular, LBMPC improves performance by choosing inputs that minimize a cost subject to the dynamics of a learned model that is updated using statistics, while ensuring safety and stability by using theory from robust MPC \cite{borelli2009,chisci2001,langson2004,limon2010} to check whether these same inputs keep a nominal model stable when it is subject to uncertainty.

LBMPC is similar to other variants of MPC.  For instance, linear parameter-varying MPC (LPV-MPC) has a model that changes using successive online linearizations of a nonlinear model \cite{kothare2000,falcone2008}; the difference is that LBMPC updates the models using statistical methods, provides robustness to poor model updates, and can involve nonlinear models.  Other forms of robust, adaptive MPC \cite{Fukushima2007301,adetola2011} use an adaptive model with an uncertainty measure to ensure robustness, while LBMPC uses a learned model to improve performance and a nominal model with an uncertainty measure to provide robustness.

Here, we focus on LBMPC for when the nominal model is linear and has a known level of uncertainty.  After reviewing notation and definitions, we formally define the LBMPC optimization problem.  Deterministic theorems about safety, stability, and robustness are proved.  Next, we discuss how learning is incorporated into the LBMPC framework using parametric or nonparametric statistical tools.  Provided sufficient excitation of the system, we show convergence of the control law of LBMPC to that of an MPC that knows the true dynamics.  The paper concludes by discussing applications of LBMPC to three experimental testbeds \cite{aswani_proc,aswani_quad_2011,bouffard2011,aswani2012nmpc} and to a simulated jet engine compression system \cite{mooregreitzer1986,epstein1989,kristic1995}.

\section{Preliminaries}

In this section, we define the notation, the model, and summarize three results on estimation and filtering.  Note that polytopes are assumed to be convex and compact.

\subsection{Mathematical Notation}
We use $A'$ to denote the transpose of $A$, and subscripts denote time indices.  Marks above a variable distinguish the state, output, and input of different models of the same system.  For instance, the true system has state $x$, the linear model with disturbance has state $\overline{x}$, and the model with oracle has state $\tilde{x}$.

A function $\gamma : \mathbb{R}_{+} \rightarrow \mathbb{R}_{+}$ is type-$\mathcal{K}$ if it is continuous, strictly increasing, and $\gamma(0) = 0$ \cite{sastry1999}.  Function $\beta : \mathbb{R}_+ \times \mathbb{R}_+ \rightarrow \mathbb{R}_+$ is type-$\mathcal{KL}$ if for each fixed $t \geq 0$, the function $\beta(\cdot,t)$ is type-$\mathcal{K}$, and for each fixed $s \geq 0$, the function $\beta(s,\cdot)$ is decreasing and $\beta(s,t)\rightarrow 0$ as $t\rightarrow \infty$ \cite{jiangwang2001}.  Also, $V_m(x)$ is a Lyapunov function for a discrete time system if (a) $V_m(\overline{x}_s) = 0$ and $V_m(x) > 0, \forall x \neq \overline{x}_s$; (b) $\alpha_1(\|x-\overline{x}_s\|) \leq V_m(x) \leq \alpha_2(\|x-\overline{x}_s\|)$, where $\alpha_1,\alpha_2$ are type-$\mathcal{K}$ functions; (c) $\overline{x}_s$ lies in this interior of the domain of $V_m(x)$; and (d) $V_{m+1}(x_{m+1})-V_m(x_m) < 0$ for states $x_m \neq \overline{x}_s$ of a dynamical system.

Let $\mathcal{U},\mathcal{V},\mathcal{W}$ be sets.  Their Minkowski sum \cite{schneider1993} is $\mathcal{U} \oplus \mathcal{V} = \{u + v : u \in \mathcal{U}; v \in \mathcal{V}\}$, and their Pontryagin set difference \cite{schneider1993} is $\mathcal{U} \ominus \mathcal{V} = \{u : u \oplus \mathcal{V} \subseteq \mathcal{U}\}$.  This set difference is not symmetric, and so the order of operations is important; also, the set difference can result in an empty set.  The linear transformation of $\mathcal{U}$ by matrix $T$ is given by $T\mathcal{U} = \{Tu : u \in \mathcal{U}\}$.  Some useful properties \cite{schneider1993,kolmanovsky1998} include: $(\mathcal{U}\ominus\mathcal{V})\oplus\mathcal{V}\subseteq\mathcal{U}$, $(\mathcal{U} \ominus (\mathcal{V} \oplus \mathcal{W})) \oplus \mathcal{W} \subseteq \mathcal{U} \ominus \mathcal{V}$, $(\mathcal{U}\ominus\mathcal{V})\ominus\mathcal{W} \subseteq\mathcal{U}\ominus(\mathcal{V}\oplus\mathcal{W})$, and $T(\mathcal{U}\ominus\mathcal{V})\subseteq T\mathcal{U}\ominus T\mathcal{V}$.

For a sequence $f_n$ and rate $r_n$, the notation $f_n = O(r_n)$ means that $\exists M,N > 0$ such that $\|f_n\| \leq M\|r_n\|$, for all $n > N$.  For a random variable $f_n$, constant $f$, and rate $r_n$, the notation $\|f_n-f\| = O_p(r_n)$ means that given $\epsilon > 0$, $\exists M,N > 0$ such that $\mathbb{P}(\|f_n - f\|/r_n > M) < \epsilon$, for all $n > N$.  The notation $f_n \xrightarrow{p} f$ means that there exists $r_n \rightarrow 0$ such that $\|f_n-f\| = O_p(r_n)$.

\subsection{Model}

Let $x \in \mathbb{R}^p$ be the state vector, $u \in \mathbb{R}^m$ be the control input, and $y \in \mathbb{R}^q$ be the output.  We assume that the states $x \in \mathcal{X}$ and control inputs $u \in \mathcal{U}$ are constrained by the polytopes $\mathcal{X},\mathcal{U}$.  The true system dynamics are
\begin{equation}
\label{eqn:smodel}
x_{n+1} = Ax_n + Bu_n + g(x_n,u_n)
\end{equation}
and $y_n = Cx_n$, where $A,B,C$ are matrices of appropriate size and $g(x,u)$ describes the unmodeled (possibly nonlinear) dynamics.  The intuition is that we have a nominal linear model with modeling error.  The term uncertainty is used interchangeably with modeling error.  

We assume that the modeling error $g(x,u)$ of (\ref{eqn:smodel}) is bounded and lies within a polytope $\mathcal{W}$, meaning that $g(x,u) \in \mathcal{W}$ for all $(x,u) \in (\mathcal{X},\mathcal{U})$.  This assumption is not restrictive in practice because it holds whenever $g(x,u)$ is continuous, since $\mathcal{X},\mathcal{U}$ are bounded.  Moreover, the set $\mathcal{W}$ can be determined using techniques from uncertainty quantification \cite{biegler2011large}; for example, the residual error from model fitting can be used to compute this uncertainty.

\subsection{Estimation and Filtering}

Simultaneously performing state estimation and learning unmodeled dynamics requires measuring all states \cite{aswani2011_techrep}, except in special cases \cite{aswani_quad_2011}.  We focus on the case in which all states are measured (i.e, $C = \mathbb{I}$).  It is possible to relax these assumptions by using set theoretic estimation methods (e.g., \cite{milanese1982}), but we do not consider those extensions here.  For simplicity of presentation, we assume that there is no measurement noise; however, our results extend to the case with measurement noise by simply replacing the modeling error $\mathcal{W}$ in our results with $\mathcal{W} \oplus \mathcal{D}$, where $\mathcal{D}$ is a polytope encapsulating the effect of bounded measurement noise.

\section{Learning-Based MPC}
\label{section:lbmpc}

This section presents the LBMPC technique.  The first step is to use reachability tools to construct a terminal set with robustness properties for the LBMPC, and this terminal set is important for proving the stability, safety, and robustness properties of LBMPC.  The terminal constraint set is typically used to guarantee both feasibility and convergence \cite{mayne2000}.  We decouple performance from robustness by identifying feasibility with robustness and convergence with performance.

One novelty of LBMPC is that different models of the system are maintained by the controller.  In order to delineate the variables of the various models, we add marks above $x$ and $u$.  The true system (\ref{eqn:smodel}) has state $x$ and input $u$.  The nominal linear model with uncertainty has state $\overline{x}$ and input $\overline{u}$; its dynamics are given by
\begin{equation}
\label{eqn:linear}
\overline{x}_{n+1} = A\overline{x}_n + B\overline{u}_n + d_n,
\end{equation}
where $d_n \in \mathcal{W}$ is a disturbance.  Because $g(x,u) \in \mathcal{W}$, the $d_n$ reflects the uncertain nature of modeling error.

For the learned model, we denote the state $\tilde{x}$ and input $\tilde{u}$.  Its dynamics are $\tilde{x}_{n+1} = A\tilde{x}_n + B\tilde{u}_n + \mathcal{O}_n(\tilde{x}_n,\tilde{u}_n)$, where $\mathcal{O}_n$ is a time-varying function that is called the oracle.  The reason we call this function the oracle is in reference to computer science in which an oracle is a black box that takes in inputs and gives an answer: LBMPC only needs to know the value (and gradient when doing numerical computations) of this function at a finite set of points; and yet, the mathematical structure and details of how the oracle is computed are not relevant to the stability and robustness properties of LBMPC.


\subsection{Construction of an Invariant Set}
\label{sect:fspt}

We begin by recalling two facts \cite{limon2010}.  First, if $(A,B)$ is stabilizable, then the set of steady-state points are $\overline{x}_s = \Lambda\theta$ and $\overline{u}_s = \Psi\theta$, where $\theta \in \mathbb{R}^m$ and $\Lambda,\Psi$ are full column-rank matrices with suitable dimensions.  These matrices can be computed with a null space computation, by noting that $\text{range}([\Lambda'\ \Psi']') = \text{null}([(\mathbb{I}-A)\ -B])$.  Second, if $(A+BK)$ is Schur stable (i.e., all eigenvalues have magnitude strictly less than one), then the control input $\overline{u}_n = K(\overline{x}_n - \overline{x}_s) + \overline{u}_s= K\overline{x}_n + (\Psi - K\Lambda)\theta$ steers (\ref{eqn:linear}) to steady-state $\overline{x}_s = \Lambda\theta$ and $\overline{u}_s = \Psi\theta$, whenever $d_n \equiv 0$.

These facts are useful because they can be used to construct a robust reachable set that serves as the terminal constraint set for LBMPC.  The particular type of reach set we use is known as a maximal output admissible disturbance invariant set $\Omega \subseteq \mathcal{X} \times \mathbb{R}^m$.  It is a set of points such that any trajectory of the system with initial condition chosen from this set and with control $\overline{u}_n$ remains within the set for any sequence of bounded disturbance, while satisfying constraints on the state and input \cite{kolmanovsky1998}.  

These properties of $\Omega$ are formalized as (a) constraint satisfaction: 
\begin{multline}
\label{eqn:consat}
\Omega \subseteq \{(\overline{x},\theta) : \overline{x} \in \mathcal{X};\Lambda\theta \in \mathcal{X}; \\
K\overline{x} + (\Psi - K\Lambda)\theta\in\mathcal{U};\Psi\theta \in \mathcal{U}\},
\end{multline}
and (b) disturbance invariance:
\begin{equation}
\label{eqn:distinv}
\begin{bmatrix} A+BK & B(\Psi-K\Lambda) \\ 0 & \mathbb{I} \end{bmatrix}\Omega \oplus (\mathcal{W} \times \{0\}) \subseteq \Omega.
\end{equation}
Recall that the $\theta$ component of the set is a parametrization of which points can be tracked using control $\overline{u}_n$.  


The set $\Omega$ has an infinite number of constraints in general, though arbitrarily good approximations can be computed in a finite number of steps \cite{kolmanovsky1998,limon2010,rakovic2010}.  These approximations maintain both disturbance invariance and constraint satisfaction, and these are the properties which are used in the proofs for our MPC scheme.  So even though our results are for $\Omega$, they equally hold true for appropriately computed approximations.  
%
%
%

\subsection{Stability and Safety of LBMPC}

LBMPC uses techniques from a type of robust MPC known as tube MPC \cite{chisci2001,langson2004,limon2010}, and it enlarges the feasible domain of the control by using tracking ideas from \cite{chisci2003,limon2008}.  The idea of tube MPC is that given a nominal trajectory of the linear system (\ref{eqn:linear}) without disturbance, then the trajectory of the true system (\ref{eqn:smodel}) is guaranteed to lie within a tube that surrounds the nominal trajectory.  A linear feedback $K$ is used to control how wide this tube can grow.  Moreover, LBMPC fixes the initial condition of the nominal trajectory as in \cite{chisci2001,langson2004}, as opposed to letting the initial condition be an optimization variable as in \cite{limon2010}.

Let $N$ be the number of time steps for the horizon of the MPC.  The width of the tube at the $i$-th step, for $i \in \mathcal{I} = \{0,\ldots,N-1\}$, is given by a set $\mathcal{R}_i$, and the constraints $\mathcal{X}$ are shrunk by the width of this tube.  The result is that if the nominal trajectory lies in $\mathcal{X}\ominus\mathcal{R}_i$, then the true trajectory lies in $\mathcal{X}$.  Similarly, suppose that the $N$-th step of the nominal trajectory lies in $\text{Proj}_x(\Omega) \ominus\mathcal{R}_N$, where $\text{Proj}_x(\Omega) = \Omega_x = \{x : \exists \theta \text{ s.t. } (x,\theta) \in \Omega\}$; then the true trajectory lies in $\text{Proj}_x(\Omega)$, and the invariance properties of $\Omega$ imply that there exists a control that keeps the system stable even under disturbances.

The following optimization problem defines LBMPC
\begin{align}
&V_n(x_n) = \textstyle\min_{c,\theta} \psi_n(\theta,\tilde{x}_n,\ldots,\tilde{x}_{n+N}, \nonumber\\
&\qquad \qquad \qquad \qquad \qquad \qquad \qquad \check{u}_n,\ldots,\check{u}_{n+N-1}) \label{eqn:glmpc}\\
&\text{subject to: } \nonumber \\
&\qquad\left.\begin{aligned}\tilde{x}_n = x_n, \quad\overline{x}_n = x_n\end{aligned}\right. \label{eqn:ic} \\
&\qquad\left.\begin{aligned}\tilde{x}_{n+i+1} = A\tilde{x}_{n+i} + B\check{u}_{n+i} + \mathcal{O}_n(\tilde{x}_{n+i},\check{u}_{n+i})\end{aligned}\right. \label{eqn:nle}\\ 
&\qquad\left.\begin{aligned}
&\overline{x}_{n+i+1} = A\overline{x}_{n+i} + B\check{u}_{n+i}\\
&\check{u}_{n+i} = K\overline{x}_{n+i} + c_{n+i}\\
&\overline{x}_{n+i+1} \in \mathcal{X}\ominus \mathcal{R}_i,\quad \check{u}_{n+i} \in \mathcal{U}\ominus K\mathcal{R}_i\\
&(\overline{x}_{n+N},\theta) \in \Omega \ominus (\mathcal{R}_N \times \{0\})
\end{aligned}\qquad\right\}\label{eqn:lc}
\end{align}
for all $i \in \mathcal{I}$ in the constraints; $K$ is the feedback gain used to compute $\Omega$; $\mathcal{R}_0 = \{0\}$ and $\mathcal{R}_i = \bigoplus_{j = 0}^{i-1}(A+BK)^j\mathcal{W}$; $\mathcal{O}_n$ is the oracle; and $\psi_n$ are non-negative functions that are Lipschitz continuous in their arguments.  Note that the Lipschitz assumption is not restrictive because it is satisfied by costs with bounded derivatives; for example, linear and quadratic costs satisfy this due to the boundedness of states and inputs.  Also note that the same control $\check{u}[\cdot]$ is applied to both the nominal and learned models.

\begin{remark}
The cost $\psi_n$ is a function of the states of the learned model, which uses the oracle to update the nominal model.  The cost function may contain a terminal cost, an offset cost, a stage cost, etc.  An interesting feature of LBMPC is that its stability and robustness properties do not depend on the actual terms within the cost function; this is one of the reasons that we state that LBMPC decouples safety (i.e., stability and robustness) from performance (i.e., having the cost be a function of the learned model).
\end{remark}

\begin{remark}
The constraints in (\ref{eqn:lc}) are taken from \cite{chisci2001} and are robustly imposed on the nominal linear model (\ref{eqn:linear}), taking into account the prior bounds on the unmodeled dynamics of the nominal model $g(x,u)$.  The reason that the constraints are not relaxed to exploit the refined results of the oracle (as in \cite{Fukushima2007301,adetola2011}) is that this provides robustness to the situation in which the learned model is not a good representation of the true dynamics.  It is known that the performance of a learning-based controller can be arbitrarily bad if the learned model does not exactly match the true model \cite{barto2004}; imposing the constraints on the nominal model, instead of the learned model, protects against this situation.
\end{remark}

\begin{remark}
There is another, more subtle reason for maintaining two models.  Suppose that the oracle is bounded by a polytope $\mathcal{O}_n \in \mathcal{P}$, where $\mathcal{P}$ is a polytope; then, the worst case error between the true model (\ref{eqn:smodel}) and the learned model (\ref{eqn:nle}) lies within the polytope $\mathcal{W} \oplus \mathcal{P}$, which is strictly larger than $\mathcal{W}$ whenever $\mathcal{P} \neq \{0\}$.  Intuitively, this means that if we were to use the worst-case bounded learned model in the constraints, then the constraints will be reduced by a larger amount $\mathcal{W} \oplus \mathcal{P}$; this is in contrast to using the nominal model in which case the constraints are reduced by only $\mathcal{W}$.
\end{remark}

Note that the value function $V_n(x_n)$ (i.e., the value of the objective (\ref{eqn:glmpc}) at its minimum), the cost function $\psi_n$, and the oracle $\mathcal{O}_n$ can be time-varying because they are functions of $n$.  It is important that the oracle be allowed to be time-varying, because it is updated using statistical methods as time advances and more data is gathered.  This is discussed in more detail in the next section.

Let $\mathcal{M}_n$ be a feasible point for the LBMPC scheme (\ref{eqn:glmpc}) with initial state $x_n$, and denote a minimizing point of (\ref{eqn:glmpc}) as $\mathcal{M}_n^*$. The states and inputs predicted by the linear model (\ref{eqn:linear}) for point $\mathcal{M}_n$ are denoted $\overline{x}_{n+i}[\mathcal{M}_n]$ and $\overline{u}_{m+n}[\mathcal{M}_n]$, for $i \in I$.  In this notation, the control law is explicitly given by 
\begin{equation}
\label{eqn:feedbackmpc}
u_m[\mathcal{M}_n^*] = Kx_n + c_n[\mathcal{M}_n^*].
\end{equation}
This MPC scheme is endowed with robust feasibility and constraint satisfaction properties, which in turn imply stability of the closed-loop control provided by LBMPC.  The equivalence between these properties and stability holds because of the compactness of constraints $\mathcal{X},\mathcal{U}$.

\begin{theorem}
\label{theorem:robustmpc}
If $\Omega$ has the properties defined in Sect.~\ref{sect:fspt} and $\mathcal{M}_n = \{c_n,\ldots,c_{n+N-1},\theta_n\}$ is feasible for the LBMPC scheme (\ref{eqn:glmpc}) with $x_n$, then applying the control (\ref{eqn:feedbackmpc}) gives
\renewcommand{\labelenumi}{\alph{enumi})}
\begin{enumerate}

\item Robust feasibility: there exists a feasible $\mathcal{M}_{n+1}$ for $x_{n+1}$;

\item Robust constraint satisfaction: $x_{n+1} \in \mathcal{X}$.
\end{enumerate}

\end{theorem}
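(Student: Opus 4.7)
My plan is the classical tube-MPC feasibility argument adapted to the present tracking formulation, in which $\Omega$ is parameterized by $\theta$ and the nominal initial condition is fixed to the measured state. Part (b) is short. The initial-condition constraint (\ref{eqn:ic}) gives $\overline{x}_n = x_n$, so the one-step nominal prediction $\overline{x}_{n+1}[\mathcal{M}_n]=Ax_n+B\check{u}_n$ and the true successor $x_{n+1}=Ax_n+B\check{u}_n+g(x_n,\check{u}_n)$ differ by a vector in $\mathcal{W}$. Combined with the tightened state constraint on $\overline{x}_{n+1}[\mathcal{M}_n]$ from (\ref{eqn:lc}) and the Pontryagin identity $(\mathcal{X}\ominus\mathcal{W})\oplus\mathcal{W}\subseteq\mathcal{X}$ listed in the preliminaries, this yields $x_{n+1}\in\mathcal{X}$.

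For part (a), I would construct an explicit candidate $\mathcal{M}_{n+1}$ by shifting $\mathcal{M}_n$ one step forward in time and appending the terminal offset $c'_{N-1}:=(\Psi-K\Lambda)\theta_n$, while keeping $\theta':=\theta_n$. This choice is deliberate: it makes the new terminal input $\check{u}'_{n+N}=K\overline{x}'_{n+N}+(\Psi-K\Lambda)\theta_n$ coincide with the feedback law under which $\Omega$ is disturbance-invariant in (\ref{eqn:distinv}). A short induction on the prediction index then shows $\overline{x}'_{n+1+i}-\overline{x}_{n+1+i}[\mathcal{M}_n]\in(A+BK)^i\mathcal{W}$: at $i=0$ the deviation equals $g(x_n,\check{u}_n)\in\mathcal{W}$, and each successive step propagates this deviation by $A+BK$ because the two nominal trajectories share the same $c$-sequence and both close the loop via $K$.

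With this bound in place, the stage state and input constraints fall out mechanically: feasibility of $\mathcal{M}_n$ at the shifted index $i+1$, combined with the propagated deviation in $(A+BK)^i\mathcal{W}$, the recursion $\mathcal{R}_{i+1}=\mathcal{R}_i\oplus(A+BK)^i\mathcal{W}$, and the Pontryagin identities in the preliminaries, lands $\overline{x}'_{n+1+i}$ inside the required tightened set, with the analogous computation for $\check{u}'_{n+i}\in\mathcal{U}\ominus K\mathcal{R}_i$. The main obstacle, and the only truly set-theoretic step, is the terminal constraint $(\overline{x}'_{n+1+N},\theta_n)\in\Omega\ominus(\mathcal{R}_N\times\{0\})$. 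I would handle it by writing $\overline{x}'_{n+1+N}=(A+BK)\,\overline{x}_{n+N}[\mathcal{M}_n]+B(\Psi-K\Lambda)\theta_n+(A+BK)^N w$ for some $w\in\mathcal{W}$, then applying the disturbance invariance (\ref{eqn:distinv}) to the perturbed pair $(\overline{x}_{n+N}[\mathcal{M}_n]+r',\theta_n)\in\Omega$ for every $r'\in\mathcal{R}_N$, and observing that the total perturbation $(A+BK)r'+w$ sweeps out $(A+BK)\mathcal{R}_N\oplus\mathcal{W}=\mathcal{R}_{N+1}\supseteq(A+BK)^N\mathcal{W}\oplus\mathcal{R}_N$, which is precisely the margin needed to place the new terminal pair back inside $\Omega\ominus(\mathcal{R}_N\times\{0\})$.
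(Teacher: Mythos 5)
Your proposal follows essentially the same route as the paper's proof (the shifted-candidate tube argument of Chisci et al.): keep $\theta'=\theta_n$, shift the $c$-sequence, append the terminal feedback offset, propagate the one-step deviation $g(x_n,u_n)$ through $(A+BK)^i$, and absorb it via $\mathcal{R}_{i+1}=\mathcal{R}_i\oplus(A+BK)^i\mathcal{W}$ and the Pontryagin identities; your terminal-set computation is a correct rearrangement of the paper's, and part (b) is identical. Two remarks. First, your choice $c'_{N-1}=(\Psi-K\Lambda)\theta_n$ is the one consistent with the stated parametrization $\check{u}_{n+i}=K\overline{x}_{n+i}+c_{n+i}$; the paper formally appends $0$ but then computes the terminal input as $[K\ \ (\Psi-K\Lambda)]\,p$, so your version quietly repairs a small notational slip. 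Second, there is one constraint your plan does not address: the input constraint at the new last stage, $\check{u}'_{n+N}\in\mathcal{U}\ominus K\mathcal{R}_{N-1}$. This cannot be obtained ``mechanically'' by shifting $\mathcal{M}_n$, whose guaranteed inputs stop at index $n+N-1$; it requires combining $(\overline{x}'_{n+N},\theta_n)\in\Omega\ominus(\mathcal{R}_{N-1}\times\{0\})$ with the constraint-satisfaction property (\ref{eqn:consat}) of $\Omega$, which gives $[K\ \ (\Psi-K\Lambda)]\,\Omega\subseteq\mathcal{U}$ and hence, using $T(\mathcal{U}\ominus\mathcal{V})\subseteq T\mathcal{U}\ominus T\mathcal{V}$, that $\check{u}'_{n+N}\in\mathcal{U}\ominus K\mathcal{R}_{N-1}$. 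With that step added, your argument is complete and matches the paper's.
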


\begin{proof}
The proof follows a similar line of reasoning as Lemma 7 of \cite{chisci2001}.  We begin by showing that the following point $\mathcal{M}_{n+1} = \{c_{n+1},\ldots,c_{n+N-1},0,\theta_n\}$ is feasible for $x_{n+1}$; the results follow as consequences of this.

Let $d_{n+1+i}[\mathcal{M}_n] = (A+BK)^i g(x_n,u_n)$, and note that $d_{n+1+i}[\mathcal{M}_n] \in (A+BK)^{i+1}\mathcal{W}$.  Some algebra gives the predicted states for $i = 0,\ldots,N-1$ as $\overline{x}_{n+1+i}[\mathcal{M}_{n+1}] = \overline{x}_{n+1+i}[\mathcal{M}_n] + d_{n+1+i}[\mathcal{M}_n] \label{eqn:mp1}$ and predicted inputs for $i = 0,\ldots,N-2$ as $\check{u}_{n+1+i}[\mathcal{M}_{n+1}] = \check{u}_{n+1+i}[\mathcal{M}_n] + Kd_{n+1+i}[\mathcal{M}_n]$.

Because $\mathcal{M}_n$ is feasible, this means by definition that $\overline{x}_{n+1+i}[\mathcal{M}_n] \in \mathcal{X} \ominus \mathcal{R}_{i+1}$ for $i = 0,\ldots,N-1$.  Combining terms gives $\overline{x}_{n+1+i}[\mathcal{M}_{n+1}] \in \mathcal{X} \ominus (\mathcal{R}_i \oplus (A+BK)^{i+1}\mathcal{W}) \oplus (A+BK)^{i+1}\mathcal{W}$.  It follows that $\overline{x}_{n+1+i}[\mathcal{M}_{n+1}] \in \mathcal{X} \ominus \mathcal{R}_i$ for $i = 0,\ldots,N-1$.  Similar reasoning gives that $\check{u}_{n+1+i}[\mathcal{M}_{n+1}] \in \mathcal{U} \ominus K\mathcal{R}_i$ for $i = 0,\ldots,N-2$.

The same argument gives $(\overline{x}_{n+1+N-1}[\mathcal{M}_{n+1}],\theta_n) \in \Omega \ominus (\mathcal{R}_{N-1} \times \{0\}) \subset \Omega.$  Now by construction of $\mathcal{M}_{n+1}$, it holds that $\check{u}_{n+1+N-1}[\mathcal{M}_{n+1}] = Mp$, where $M = [K\ (\Psi-K\Lambda)]$ is a matrix and $p = (\overline{x}_{n+1+N-1}[\mathcal{M}_{n+1}],\theta_n)$ is a point.  Therefore, we have $\check{u}_{n+1+N-1}[\mathcal{M}_{n+1}] = Mp \subseteq M\Omega \ominus M(\mathcal{R}_{N-1}\times\{0\}) = M\Omega \ominus K\mathcal{R}_{N-1}$.  However, the constraint satisfaction property of $\Omega$ (\ref{eqn:consat}) implies that $M\Omega \subseteq \mathcal{U}$.  Consequently, we have that $\check{u}_{n+1+N-1}[\mathcal{M}_{n+1}] \in \mathcal{U} \ominus K\mathcal{R}_{N-1}$.

Next, observe that the control $\check{u}_{n+1+N-1}[\mathcal{M}_{n+1}]$ leads to $\overline{x}_{n+1+N}[\mathcal{M}_{n+1}] = ([A\ 0] + BM)p$.  Consequently, we have $\overline{x}_{n+1+N}[\mathcal{M}_{n+1}] \in ([A\ 0] + BM)\Omega \ominus (A+BK)\mathcal{R}_{N-1}$.  As a result of the disturbance invariance property of $\Omega$ (\ref{eqn:distinv}), it must be that $(\overline{x}_{n+1+N}[\mathcal{M}_{n+1}], \theta_n) \in (\Omega \ominus (\mathcal{W}\times\{0\})) \ominus ((A+BK)\mathcal{R}_{N-1}\times\{0\}) = \Omega\ominus(\mathcal{R}_N\times\{0\})$.  This completes the proof for part (a).

Similar arithmetic shows that the true, next state is $x_{n+1}[\mathcal{M}_n] = \overline{x}_{n+1}[\mathcal{M}_n] + w_n$ where $w_n = g(x_n,u_n) \in \mathcal{W}$.  Since $\mathcal{M}_n$ is a feasible point, it holds that $\overline{x}_{n+1}[\mathcal{M}_n] \in \mathcal{X} \ominus \mathcal{W}$. This implies that $x_{n+1}[\mathcal{M}_n] = \overline{x}_{n+1}[\mathcal{M}_n] + w_n \in (\mathcal{X} \ominus \mathcal{W}) \oplus \mathcal{W} \subseteq \mathcal{X}$; this proves part (b).
\end{proof}

\begin{corollary}
If $\Omega$ has the properties defined in Sect. \ref{sect:fspt} and $\mathcal{M}_0$ is feasible for the LBMPC scheme (\ref{eqn:glmpc}) with initial state $x_0$, then the closed-loop system provided by LBMPC is (a) stable, (b) satisfies all state and input constraints, and (c) feasible, for all points of time $n \geq 0$.
\end{corollary}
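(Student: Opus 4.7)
The plan is to apply Theorem~\ref{theorem:robustmpc} inductively to propagate feasibility and constraint satisfaction forward in time, and then read off stability from compactness of $\mathcal{X}$ and $\mathcal{U}$.

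I would prove (b) and (c) together by induction on $n$. The base case $n=0$ is immediate from the hypothesis that $\mathcal{M}_0$ is feasible for $x_0$: the first-step constraints in (\ref{eqn:lc}) give $\overline{x}_0 = x_0 \in \mathcal{X}$, and because $\mathcal{R}_0 = \{0\}$, the robustified input constraint collapses to $u_0 = Kx_0 + c_0^*[\mathcal{M}_0^*] \in \mathcal{U}\ominus K\mathcal{R}_0 = \mathcal{U}$. For the inductive step, assume $\mathcal{M}_n$ is feasible at $x_n$ and that $x_n \in \mathcal{X}$, $u_n \in \mathcal{U}$. Theorem~\ref{theorem:robustmpc}(a) supplies an explicit feasible $\mathcal{M}_{n+1}$ for $x_{n+1}$, so a minimizer $\mathcal{M}_{n+1}^*$ exists because the feasible set defined by (\ref{eqn:ic})--(\ref{eqn:lc}) is compact (linear constraints cut out by the compact sets $\mathcal{X}, \mathcal{U}, \Omega$, with $\theta$ bounded via $\Lambda\theta \in \mathcal{X}$) and $\psi_{n+1}$ is continuous; this is (c). Theorem~\ref{theorem:robustmpc}(b) yields $x_{n+1} \in \mathcal{X}$, and the same $\mathcal{R}_0 = \{0\}$ argument, applied now at $\mathcal{M}_{n+1}^*$, gives $u_{n+1} \in \mathcal{U}$, closing the induction for (b).

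For (a), because $x_n \in \mathcal{X}$ and $u_n \in \mathcal{U}$ hold for all $n \geq 0$ and both are compact polytopes, the closed-loop trajectory is uniformly bounded, which is stability in the Lagrange sense. This is precisely the observation made in the paragraph preceding Theorem~\ref{theorem:robustmpc}, and it is the only form of stability the statement can mean without further assumptions, since the oracle and cost $\psi_n$ are time-varying and need not admit a standard monotone-Lyapunov argument.

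The main obstacle is not conceptual but bookkeeping: one must be careful to distinguish the robustified constraints $\mathcal{X}\ominus \mathcal{R}_i$ and $\mathcal{U}\ominus K\mathcal{R}_i$ at prediction index $i$ from the physical constraints $\mathcal{X}, \mathcal{U}$ imposed on $x_n$ and $u_n$, and to invoke $\mathcal{R}_0 = \{0\}$ exactly where that conversion is needed. Beyond this, no technical machinery is required; the corollary is essentially a repeated application of Theorem~\ref{theorem:robustmpc}.
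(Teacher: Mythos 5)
Your proposal is correct and follows exactly the route the paper intends: the paper dispenses with the proof in a one-line remark (``robust feasibility and constraint satisfaction, as in Theorem~\ref{theorem:robustmpc}, trivially imply this result''), relying on the induction you spell out and on the compactness of $\mathcal{X},\mathcal{U}$ for stability, as noted in the paragraph preceding Theorem~\ref{theorem:robustmpc}. You have simply made explicit the induction, the $\mathcal{R}_0=\{0\}$ bookkeeping, and the Lagrange-stability reading that the paper leaves implicit.
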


\begin{remark}
Robust feasability and constraint satisfaction, as in Theorem \ref{theorem:robustmpc}, trivially imply this result.
\end{remark}

\begin{remark}
These results apply to the case where $\psi_n,\mathcal{O}_n$ are time-varying; this allows, for example, changing the set point of the LBMPC using the approach in \cite{limon2008}.  Moreover, the safety and stability that we have proved for the closed-loop system under LBMPC are actually robust results because they imply that the states remain within bounded constraints even under disturbances, provided the modeling error in (\ref{eqn:linear}) follows the prescribed bound and the invariant set $\Omega$ can be computed.
\end{remark}

Next, we discuss additional types of robustness provided by LBMPC.  First, we show that the value function $V_n(x_n)$ of LBMPC (\ref{eqn:glmpc}) is continuous, and this property can be used for establishing certain other types of robustness of an MPC controller \cite{grimmetal2004,magni2007,rawlings2009model,limon2009}. 

\begin{lemma}
\label{lemma:cont}
Let $\mathcal{X}_F = \{x_n : \exists \mathcal{M}_n\}$ be the feasible region of the LBMPC (\ref{eqn:glmpc}).  If $\psi_n, \mathcal{O}_n$ are continuous, then $V_n(x_n)$ is continuous on $\text{int}(\mathcal{X}_F)$.
\end{lemma}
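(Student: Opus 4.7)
The plan is to express the LBMPC problem (\ref{eqn:glmpc}) as a parametric optimization with parameter $x_n$ and decision variable $z = (c_n,\ldots,c_{n+N-1},\theta)$, and then invoke Berge's maximum theorem. Two ingredients are needed: joint continuity of the objective in $(x_n,z)$, and continuity (both upper and lower semicontinuity) of the feasible-set correspondence $x_n \mapsto \mathcal{Z}(x_n)$ on $\text{int}(\mathcal{X}_F)$.

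For the objective, the nominal predictions $\overline{x}_{n+i}$ and $\check{u}_{n+i}$ are affine in $(x_n,z)$ via (\ref{eqn:ic}) and the linear recursion in (\ref{eqn:lc}), hence continuous. The learned states satisfy the recursion (\ref{eqn:nle}) with $\tilde{x}_n = x_n$; since $\mathcal{O}_n$ is continuous, a straightforward induction on $i$ shows that each $\tilde{x}_{n+i}$ is continuous in $(x_n,z)$. Composing with $\psi_n$, which is Lipschitz (hence continuous) in its arguments, yields joint continuity of the objective.

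For the correspondence, all constraints in (\ref{eqn:lc}) are polyhedral, because $\overline{x}_{n+i}$ and $\check{u}_{n+i}$ are affine in $(x_n,z)$ and the sets $\mathcal{X}\ominus\mathcal{R}_i$, $\mathcal{U}\ominus K\mathcal{R}_i$, and $\Omega\ominus(\mathcal{R}_N\times\{0\})$ are polytopes. Therefore $\mathcal{Z}(x_n) = \{z : Hz \leq h - Gx_n\}$ for fixed $H,h,G$. Compactness follows from compactness of $\mathcal{X}$ and $\mathcal{U}$ (which bound the $c_{n+i}$ once $\overline{x}$ and $\check{u}$ are constrained) together with the full column rank of $\Lambda$ combined with $\Lambda\theta \in \mathcal{X}$ from (\ref{eqn:consat}), which bounds $\theta$. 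Upper semicontinuity then follows from closedness of the polyhedron and compactness of its values by a standard closed-graph argument.

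The main obstacle is lower semicontinuity of $\mathcal{Z}(\cdot)$ on $\text{int}(\mathcal{X}_F)$: given $x_n^k \to x_n \in \text{int}(\mathcal{X}_F)$ and any $z \in \mathcal{Z}(x_n)$, we must exhibit $z^k \in \mathcal{Z}(x_n^k)$ with $z^k \to z$. Naive strict-feasibility arguments fail because several constraints may be active at $z$. Instead, we invoke the classical fact that parametric polyhedral multifunctions are Lipschitz continuous on the interior of their effective domain: the Hoffman bound gives $\text{dist}(z,\mathcal{Z}(x_n^k)) \leq L\|x_n - x_n^k\|$ for a constant $L$ depending only on $H$, so that projecting $z$ onto $\mathcal{Z}(x_n^k)$ produces the required $z^k \to z$. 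With joint continuity of the objective and continuity of the compact-valued correspondence both in hand, Berge's maximum theorem delivers continuity of $V_n$ on $\text{int}(\mathcal{X}_F)$.
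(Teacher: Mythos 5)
Your proposal is correct and follows essentially the same route as the paper: recast (\ref{eqn:glmpc}) as a parametric program, establish joint continuity of the composed objective and continuity of the linear constraint correspondence, and conclude by Berge's maximum theorem. The only difference is that where the paper cites \cite{grimmetal2004} for continuity of the polyhedral feasible-set map, you supply the argument explicitly (compactness plus closed graph for upper semicontinuity, Hoffman's error bound for lower semicontinuity on $\text{int}(\mathcal{X}_F)$), which is a sound way to fill in that step.
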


\begin{proof}
We define a cost function $\tilde{\psi}_n$ and constraint function $\phi$ such that the LBMPC (\ref{eqn:glmpc}) can be rewritten as
\begin{equation}
\begin{aligned}
\textstyle\min_{c,\theta} \ & \tilde{\psi}_n(\theta,x_n,c_n, \ldots,c_{n+N-1}) \\
\text{s.t. } \ & (c,\theta) \in \phi(x_n).
\end{aligned}
\end{equation}
The proof proceeds by showing that both the objective $\tilde{\psi}_n$ and constraint $\phi$ are continuous.  Under such continuity, we get continuity of the value function by the Berge maximum theorem \cite{berge1963} (or equivalently by Theorem C.34 of \cite{rawlings2009model}).

Because the constraints (\ref{eqn:ic}) and (\ref{eqn:lc}) in LBMPC are linear, the constraint $\phi$ is continuous \cite{grimmetal2004}.  Continuity of $\tilde{\psi}_n$ follows by noting that it is the composition of continuous functions --- specifically (\ref{eqn:glmpc}), (\ref{eqn:ic}), and (\ref{eqn:nle}) --- is also a continuous function \cite{rudin1964}.
\end{proof}

\begin{remark}
This result is surprising because a non-convex (and hence nonlinear) MPC problem generally has a discontinuous value function (cf. \cite{grimmetal2004}).  LBMPC is non-convex when $\mathcal{O}_n$ is nonlinear (or $\psi_n$ is non-convex), and the reason that we have a continuous value function is that our active constraints are linear equality constraints or polytopes.  In practice, this result requires being able to numerically compute a global minimum, and this can only be efficiently done for convex optimization problems.
\end{remark}

\begin{remark}
The proof of this result suggests another benefit of LBMPC: The fact that the constraints are linear means that suboptimal solutions can be computed by solving a linear (and hence convex) feasibility problem, even when the LBMPC problem is nonlinear.  This enables more precise tradeoffs between computation and solution accuracy, as compared to conventional forms of nonlinear MPC.
\end{remark}

Next, we prove that LBMPC is robust because its worst case behavior is an increasing function of modeling error.  This type of robustness if formalized by the following definition.  

\begin{definition}[{Grimm, et al. \cite{grimmetal2004}}]
A system is robustly asymptotically stable (RAS) about $\overline{x}_s$ if there exists a type-$\mathcal{KL}$ function $\beta$ and for each $\epsilon > 0$ there exists $\delta > 0$, such that for all $d_n$ satisfying $\max_n\|d_n\| < \delta$ it holds that $x_n \in \mathcal{X}$ and $\|x_n-\overline{x}_s\| \leq \beta(\|x_0 - \overline{x}_s\|,n) + \epsilon$ for all $n \geq 0$.
\end{definition}

\begin{remark}
The intuition is that if a controller for the approximate system (\ref{eqn:linear}) with no disturbance converges to $\overline{x}_s$, then the same controller applied to the approximate system (\ref{eqn:linear}) with bounded disturbance (note that this also includes the true system (\ref{eqn:smodel})) asymptotically remains within a bounded distance from $\overline{x}_s$.  
\end{remark}

We can now prove when LBMPC is RAS.  The key intuitive points are that linear MPC (i.e, LBMPC with an identically zero oracle: $\overline{\mathcal{O}}_n \equiv 0$) needs to be provably convergent for the approximate model with no disturbance, and the oracle for LBMPC needs to be bounded.

\begin{theorem}
\label{theorem:ras}
Assume (a) $\Omega$ has the properties defined in Sect. \ref{sect:fspt}; (b) $\mathcal{M}_0$ is feasible for LBMPC (\ref{eqn:glmpc}) with $x_0$; (c) the cost function $\psi_n$ is time-invariant, continuous, and strictly convex, and (d) there exists a continuous Lyapunov function $W(x)$ for the approximate system (\ref{eqn:linear}) with no disturbance, when using the control law of linear MPC (i.e, LBMPC with $\overline{\mathcal{O}}_n \equiv 0$).  Under these conditions, the control law of LBMPC is RAS with respect to the disturbance $d_n$ in (\ref{eqn:linear}), whenever the oracle $\mathcal{O}_n$ is a continuous function satisfying $\max_{n,\mathcal{X}\times\mathcal{U}}\|\mathcal{O}_n\| \leq \delta$.  Note that this $\delta$ is the same one as from the definition of RAS.
\end{theorem}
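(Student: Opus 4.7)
The plan is to treat the closed-loop LBMPC system as a continuous perturbation of the closed-loop linear-MPC system (i.e., LBMPC with $\mathcal{O}_n\equiv 0$) with no disturbance, and to transfer asymptotic stability from the latter to robust asymptotic stability of the former using the hypothesized Lyapunov function $W$ together with the continuity machinery developed for Lemma~\ref{lemma:cont}. Let $u^{\mathrm{LMPC}}(x)$ and $u^{\mathrm{LBMPC}}(x)$ denote the linear-MPC and LBMPC feedbacks (\ref{eqn:feedbackmpc}), respectively; by Theorem~\ref{theorem:robustmpc} both are defined on the same compact feasible set $\mathcal{X}_F$, because the constraints (\ref{eqn:lc}) involve $\overline{x}$ rather than $\tilde{x}$ and so do not depend on the oracle.

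The first step is to show that $u^{\mathrm{LBMPC}}(x)$ is continuous in $x$ and that $\sup_{x\in\mathcal{X}_F}\|u^{\mathrm{LBMPC}}(x)-u^{\mathrm{LMPC}}(x)\|\leq\kappa(\delta)$ for some continuous $\kappa$ with $\kappa(0)=0$. Strict convexity and continuity of $\psi_n$ (time-invariant by hypothesis), together with the linearity of the constraints (\ref{eqn:ic})--(\ref{eqn:lc}), make the LBMPC minimizer unique for each pair $(x,\mathcal{O})$. The single-valued form of the Berge maximum theorem used in the proof of Lemma~\ref{lemma:cont} then yields joint continuity of this minimizer in $x$ and in the oracle $\mathcal{O}$ (under the sup-norm on $\mathcal{X}\times\mathcal{U}$), and compactness of $\mathcal{X}_F$ promotes pointwise to uniform convergence as the oracle tends to zero.

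The second step is a Lyapunov argument. Using $W$ as a candidate Lyapunov function for $x_{n+1}=Ax_n+Bu^{\mathrm{LBMPC}}(x_n)+d_n$, I would split
\[
\Delta W_n=\bigl[W(Ax_n+Bu^{\mathrm{LMPC}}(x_n))-W(x_n)\bigr]+\bigl[W(x_{n+1})-W(Ax_n+Bu^{\mathrm{LMPC}}(x_n))\bigr].
\]
The first bracket is bounded above by $-\alpha_3(\|x_n-\overline{x}_s\|)$ for some type-$\mathcal{K}$ function $\alpha_3$, by the Lyapunov hypothesis and compactness of $\mathcal{X}$. The second bracket is bounded by the modulus of continuity of $W$ (continuous on the compact set $\mathcal{X}$, hence uniformly continuous) applied to $\|B\|\kappa(\delta)+\|d_n\|$. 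Together, whenever $\max_n\|d_n\|<\delta$, this yields an ISS-type Lyapunov estimate $\Delta W_n\leq -\alpha_3(\|x_n-\overline{x}_s\|)+\sigma(\delta)$ with $\sigma$ continuous and $\sigma(0)=0$. Standard ISS arguments (e.g.\ \cite{jiangwang2001}) then produce the type-$\mathcal{KL}$ function $\beta$ and an ultimate bound on $\|x_n-\overline{x}_s\|$ that vanishes with $\delta$; choosing $\delta$ small enough that the ultimate bound is below any prescribed $\epsilon$ yields RAS.

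The main obstacle is justifying the uniform convergence of $u^{\mathrm{LBMPC}}$ to $u^{\mathrm{LMPC}}$ on $\mathcal{X}_F$: the Berge maximum theorem generically only supplies upper hemicontinuity of the argmin, and it is precisely the combination of strict convexity of $\psi_n$, continuity of the oracle, and independence of the feasible correspondence $\phi$ from $\mathcal{O}_n$ that upgrades this to single-valued continuity, which is then uniform on the compact feasible set. A secondary care point is that $W$ is only assumed continuous (not $C^1$), so the perturbation term must be controlled via the modulus of continuity rather than a gradient bound, and the Lyapunov-decrease term $\alpha_3$ must be extracted uniformly on $\mathcal{X}$ from the strict inequality $W(\overline{x}_{n+1})-W(\overline{x}_n)<0$ using compactness.
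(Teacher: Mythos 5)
Your proposal is correct in outline and shares the paper's skeleton: both arguments reduce the theorem to (i) the difference between the LBMPC and linear-MPC inputs vanishes as the oracle bound $\delta\to 0$, established via strict convexity, the Berge maximum theorem machinery of Lemma~\ref{lemma:cont}, and the fact that the feasible set does not depend on $\mathcal{O}_n$; and (ii) the nominal linear-MPC closed loop, being asymptotically stable with a continuous Lyapunov function, tolerates the resulting state-dependent perturbation $e_n=B(\check{u}_n[\mathcal{M}_n^*]-\check{u}_n[\overline{\mathcal{M}}_n^*])+d_n$. Where you diverge is in step (ii): the paper simply invokes Proposition~8 of \cite{grimmetal2004} together with Theorem~\ref{theorem:robustmpc} to get the type-$\mathcal{KL}$ bound and constraint satisfaction for small $e_n$, whereas you prove the robustness transfer by hand, splitting $\Delta W_n$ into a nominal-decrease term and a perturbation term controlled by the modulus of continuity of $W$, then closing with a discrete-time ISS argument in the style of \cite{jiangwang2001}. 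Your route is more self-contained but obliges you to extract a uniform type-$\mathcal{K}$ decrease rate $\alpha_3$ from the merely strict pointwise inequality $W(x_{m+1})-W(x_m)<0$ (which works, by compactness of $\mathcal{X}_F$ and continuity of $W$ and of the closed-loop map, and uses the sandwich bounds $\alpha_1\leq W\leq\alpha_2$ already built into the paper's definition of a Lyapunov function); the citation-based route hides exactly this work inside \cite{grimmetal2004}. Two small points to make explicit: the $x_n\in\mathcal{X}$ half of the RAS definition should be attributed to the robust constraint satisfaction of Theorem~\ref{theorem:robustmpc} rather than to the Lyapunov estimate, and the uniformity over $x\in\mathcal{X}_F$ of the bound $\|\check{u}_n[\mathcal{M}_n^*]-\check{u}_n[\overline{\mathcal{M}}_n^*]\|\leq\kappa(\delta)$ --- which you correctly flag as the crux, since Berge only gives upper hemicontinuity upgraded by single-valuedness at $\mathcal{O}\equiv 0$ --- is needed (and left implicit) in the paper's proof as well, so your treatment is no less rigorous on that point.
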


\begin{proof}
Let $\overline{\mathcal{M}}_n^*$ be the minimizer for linear MPC, and note that it is unique because $\psi_n$ is assumed to be strictly convex.  Similarly, let $\mathcal{M}_n^*$ be a minimizer for LBMPC.  Now consider the state-dependent disturbance
\begin{equation}
\label{eqn:dist}
e_n = B(\check{u}_n[\mathcal{M}_n^*]-\check{u}_n[\overline{\mathcal{M}}_n^*]) + d_n,
\end{equation}
for the approximate system (\ref{eqn:linear}).  By construction, it holds that $x_{n+1}[\mathcal{M}_n^*] = \overline{x}_{n+1}[\overline{\mathcal{M}}_n^*] + e_n$.

Proposition 8 of \cite{grimmetal2004} and Theorem \ref {theorem:robustmpc} imply that given $\epsilon > 0$, there exists $\delta_1 > 0$ such that for all $e_n$ satisfying $\max_n\|e_n\| < \delta_1$ it holds that $x_n \in \mathcal{X}$ and $\|x_n-\overline{x}_s\| \leq \beta(\|x_0 - \overline{x}_s\|,n) + \epsilon$ for all $n \geq 0$.  What remains to be checked is whether there exists $\delta$ such that $\max_n\|e_n\| < \delta_1$ for the $e_n$ defined in (\ref{eqn:dist}).

The same argument as used in Lemma \ref{lemma:cont} coupled with the strict convexity of the linear MPC gives that $\mathcal{M}_n^*$ is continuous, with respect to $\mathcal{O}_n$, when $\mathcal{O}_n \equiv 0$.  (Recall that the minimizer at this point is $\overline{\mathcal{M}}_n^*$.)  Because of this continuity, this means that there exists $\delta_2 > 0$ such that $\|\check{u}_n[\mathcal{M}_n^*]-\check{u}_n[\overline{\mathcal{M}}_n^*]\| \leq \delta_1/(2\|B\|)$, whenever the oracle lies in the set $\{\mathcal{O}_n : \|\mathcal{O}_n\| < \delta_2\}$.  Taking $\delta = \min\{\delta_1/2,\delta_2\}$ gives the result.
\end{proof}

\begin{remark}
Condition (a) is satisfied if the set $\Omega$ can be computed; it cannot be computed in some situations because it is possible to have $\Omega = \emptyset$.  Conditions (b) and (c) are easy to check.  As we will show in Sect. \ref{section:tracking}, certain systems have easy sufficient conditions for checking the Lyapunov conditions in (d).
\end{remark}

\subsubsection{Example: Tracking in Linearized Systems}
\label{section:tracking}
Here, we show that the Lyapunov condition in Theorem \ref{theorem:ras} can be easily checked when the cost function is quadratic and the approximate model is linear with bounds on its uncertainty.  Suppose we use the quadratic cost defined in \cite{limon2008}
\begin{multline}
\label{eqn:lincost}
\psi_n = \|\tilde{x}_{n+N}-\Lambda\theta\|_P^2 + \|\overline{x}_s - \Lambda\theta\|_T^2 \\
+ \textstyle \sum_{i=0}^{N-1}\|\tilde{x}_{n+i}-\Lambda\theta\|_Q^2 + \|\check{u}_{n+i}-\Psi\theta\|_R^2,
\end{multline}
where $P,Q,R,T$ are positive definite matrices, to track to the point $\overline{x}_s \in \{\Lambda\theta : \Lambda\theta \in \mathcal{X}\}$.  Then, the Lyapunov condition required for Theorem \ref{theorem:ras} holds.

\begin{proposition}
\label{proposition:lincost}
For linear MPC with cost (\ref{eqn:lincost}) where $\overline{x}_s \in \{\Lambda\theta : \Lambda\theta \in \mathcal{X}\}$ is kept fixed, if $(A+BK)$ is Schur stable and $P$ solves the discrete-time Lyapunov equation $(A+BK)'P(A+BK)-P = -(Q+K'RK)$; then there exists a continuous Lyapunov function $W$ for the equilibrium point $\overline{x}_s$ of the approximate model (\ref{eqn:linear}) with no disturbances.
\end{proposition}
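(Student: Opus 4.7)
I would take $W(x) = V(x)$, the value function of linear MPC (LBMPC with $\overline{\mathcal{O}}_n \equiv 0$), as the candidate continuous Lyapunov function for $\overline{x}_s$. Continuity on $\mathrm{int}(\mathcal{X}_F)$ is immediate from Lemma~\ref{lemma:cont}, since the quadratic cost (\ref{eqn:lincost}) and the identically-zero oracle are both continuous. For positive definiteness, at $x_n = \overline{x}_s$ one picks $\theta$ with $\Lambda\theta = \overline{x}_s$ (available by the standing assumption on $\overline{x}_s$) and $c_{n+i} = (\Psi - K\Lambda)\theta$ for every $i$; the predicted trajectory then stays at $(\overline{x}_s, \Psi\theta)$, every term of (\ref{eqn:lincost}) vanishes, and $V(\overline{x}_s) = 0$. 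For $x \neq \overline{x}_s$, the two $i = 0$ contributions $\|x - \Lambda\theta\|_Q^2 + \|\overline{x}_s - \Lambda\theta\|_T^2$ cannot both vanish simultaneously, so $V(x) > 0$.

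The decrease estimate follows from a shifted-sequence argument. Given an optimizer $\mathcal{M}_n^* = (c_n^*,\ldots,c_{n+N-1}^*,\theta^*)$ at $x_n$, I would propose the candidate
\begin{equation*}
\tilde{\mathcal{M}}_{n+1} = (c_{n+1}^*,\ldots,c_{n+N-1}^*,(\Psi-K\Lambda)\theta^*,\theta^*)
\end{equation*}
at $x_{n+1}$. Feasibility is obtained as in Theorem~\ref{theorem:robustmpc}, which for zero disturbance collapses to the invariance property (\ref{eqn:distinv}) of $\Omega$. Writing $\eta = \overline{x}_{n+N}^* - \Lambda\theta^*$, the appended control becomes $\Psi\theta^* + K\eta$ and the appended next-state is $\Lambda\theta^* + (A+BK)\eta$. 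Substituting into $\psi_n$ and invoking the Lyapunov identity $(A+BK)'P(A+BK) - P = -(Q + K'RK)$ shows that the new $i = N$ stage cost plus the new terminal cost exactly cancels the old terminal cost; the $T$-weighted offset term is unchanged. This yields
\begin{equation*}
V(x_{n+1}) - V(x_n) \leq -\|x_n - \Lambda\theta^*\|_Q^2 - \|u_n - \Psi\theta^*\|_R^2 \leq 0.
\end{equation*}

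The main obstacle is upgrading this bound to a strict decrease for $x_n \neq \overline{x}_s$, which is exactly the subtle point in tracking MPC. The right-hand side can vanish only when $x_n = \Lambda\theta^*$ and $u_n = \Psi\theta^*$, in which case the optimal prediction freezes at this artificial steady state and $V(x_n)$ collapses to $\|\overline{x}_s - \Lambda\theta^*\|_T^2$. Here I would adapt the argument of \cite{limon2008}: because $\overline{x}_s$ is itself admissible, a competing feasible sequence based on $\theta_s$ with $\Lambda\theta_s = \overline{x}_s$ and steering toward $\overline{x}_s$ attains a strictly smaller objective whenever $\Lambda\theta^* \neq \overline{x}_s$, contradicting optimality of $\theta^*$. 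Thus the only degenerate case is $x_n = \overline{x}_s$, strict decrease holds off-equilibrium, and $W = V$ satisfies the Lyapunov conditions stated in Sect.~2.1.
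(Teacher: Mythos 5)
Your route is genuinely different from the paper's, and it runs into exactly the obstacle the paper flags in the remark immediately following this proposition. The paper does \emph{not} exhibit the value function as the Lyapunov function. Instead it verifies two qualitative properties --- local uniform stability, obtained from the bound $\|\overline{x}_n - \overline{x}_s\|_S^2 \le \overline{V}(\overline{x}_n) \le \overline{V}(\overline{x}_0)$ for a positive definite $S$ dominated by $Q$ and $T$, combined with continuity of $\overline{V}$ from Lemma~\ref{lemma:cont} and the monotonicity and attractivity facts imported from Theorem 1 of \cite{limon2008} --- and then invokes a \emph{converse} Lyapunov theorem \cite{jiang2002} to conclude that some continuous $W$ exists. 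Your shifted-sequence computation and the telescoping via $(A+BK)'P(A+BK)-P=-(Q+K'RK)$ are correct and yield $V(x_{n+1}) \le V(x_n) - \|x_n-\Lambda\theta^*\|_Q^2 - \|u_n - \Psi\theta^*\|_R^2$; this is essentially the non-increase property the paper cites rather than re-derives.

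The gap is in promoting that inequality to a usable Lyapunov decrease. The one-step decrement you obtain is measured relative to the \emph{artificial} steady state $\Lambda\theta^*$, not to $\overline{x}_s$. Your convexity argument adapted from \cite{limon2008} only excludes the case where the decrement is \emph{exactly} zero at a state other than $\overline{x}_s$; it does not exclude the decrement being arbitrarily small at states bounded away from $\overline{x}_s$ (take $x_n$ close to $\Lambda\theta^*$ while $\Lambda\theta^*$ is far from $\overline{x}_s$). Hence there is no type-$\mathcal{K}$ function $\alpha_3$ with $V(x_{n+1})-V(x_n) \le -\alpha_3(\|x_n-\overline{x}_s\|)$, which is the decrease condition the Lyapunov function must satisfy for the purpose this proposition serves, namely supplying hypothesis (d) of Theorem~\ref{theorem:ras} so that Proposition 8 of \cite{grimmetal2004} delivers the $\mathcal{KL}$ estimate. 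A function that merely decreases strictly off the equilibrium, with no uniform rate, does not produce that estimate; this is precisely why the authors remark that the value function ``is non-increasing, but it is not strictly decreasing'' and why they detour through converse Lyapunov theory. To repair your argument you would either need to bound $\|\Lambda\theta^*(x_n)-\overline{x}_s\|$ from below in terms of $\|x_n-\overline{x}_s\|$ (not attempted, and not true uniformly), or abandon $W=V$ and let the converse theorem manufacture $W$ from the stability and attractivity properties --- which is the paper's proof.
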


\begin{proof}
First note that because we consider the linear MPC case, we have by definition $\tilde{x} = \overline{x}$.

Results from converse Lyapunov theory \cite{jiang2002} indicate that the result is true if the following two conditions hold.  The first is local uniform stability, meaning that for every $\epsilon > 0$, there exists some $\delta > 0$ such that $\|\overline{x}_0 - \overline{x}_s\| < \delta$ implies that $\|\overline{x}_n - \overline{x}_s\| < \epsilon$ for all $n \geq 0$.  The second is that $\lim_{n\rightarrow\infty}\|\overline{x}_n-\overline{x}_s\|=0$ for all feasible points $\overline{x}_0 \in \mathcal{X}_F$.

The second condition was shown in Theorem 1 of \cite{limon2008}, and so we only need to check the first condition.  We begin by noting that since $Q,T$ are positive definite matrices, there exists a positive definite matrix $S$ such that $S < Q$ and $S < T$.  Next, observe that $\|\tilde{x}_{n} - \overline{x}_s\|_S^2 \leq \|\tilde{x}_n-\Lambda\theta\|_Q^2 + \|\overline{x}_s - \Lambda\theta\|_T^2 \leq \psi_n$.  Minimizing the both sides of the inequality subject to the linear MPC constraints yields $\|\tilde{x}_{n} - \overline{x}_s\|_S^2 \leq \overline{V}(\overline{x}_n)$, where $\overline{V}(\overline{x}_n)$ is the value function of the linear MPC optimization.

Because linear MPC is the special case of LBMPC in which $\mathcal{O}_n \equiv 0$, the result in Lemma \ref{lemma:cont} applies: The value function $\overline{V}(\overline{x}_n)$ is continuous.  Furthermore, the proof of Theorem 1 of \cite{limon2008} shows that the value function is non-increasing (i.e., $\overline{V}(\overline{x}_{n+1})) \leq \overline{V}(\overline{x}_n)$), non-negative (i.e., $\overline{V}(\overline{x}_n) \geq 0$), and zero-valued only at the equilibrium point (i.e., $\overline{V}(\overline{x}_s) = 0$).  Because of the continuity of the value function, given $\overline{\epsilon} > 0$ there exists $\overline{\delta} > 0$, such that $\overline{V}(\overline{x}_0) < \overline{\epsilon}$ whenever $\|\overline{x}_0 - \overline{x}_s\| < \overline{\delta}$.  The local uniform stability condition holds by noting that $\|\tilde{x}_{n} - \overline{x}_s\|_S^2 \leq \overline{V}(\overline{x}_n) \leq \overline{V}(\overline{x}_0) = \overline{\epsilon}$, and this proves the result.
\end{proof}

\begin{remark}
The result does not immediately follow from \cite{limon2008}, because the value function of the linear MPC is not a Lyapunov function in this situation.  In particular, the value function is non-increasing, but it is not strictly decreasing.
\end{remark}

\section{The Oracle}

In theoretical computer science, oracles are black boxes that take in inputs and give answers.  An important class of arguments known as relativizing proofs utilize oracles in order to prove results in complexity theory and computability theory.  These proofs proceed by endowing the oracle with certain generic properties and then studying the resulting consequences.

We have named the functions $\mathcal{O}_n$ oracles in reference to those in computer science.  Our reason is that we proved robustness and stability properties of LBMPC by only assuming generic properties, such as continuity or boundedness, on the function $\mathcal{O}_n$.  These functions are arbitrary, which can include worst case behavior, for the purpose of the theorems in the previous section.

Whereas the previous section considered the oracles as abstract objects, here we discuss and study specific forms that the oracle can take.  In particular, we can design $\mathcal{O}_n$ to be a statistical tool that identifies better system models.  This leads to two natural questions: First, what are examples of statistical methods that can be used to construct an oracle for LBMPC?  Secondly, when does the control law of LBMPC converge to the control law of MPC that knows the true model?


This section begins by defining two general classes of statistical tools that can be used to design the oracle $\mathcal{O}_n$.  For concreteness, we provide a few examples of methods that belong to these two classes.  The section concludes by addressing the second question above.  Because our control law is the minimizer of an optimization problem, the key technical issue that we discuss is sufficient conditions that ensure convergence of the minimizers of a sequence of optimization problems to the minimizer of a limiting optimization problem.

\subsection{Parametric Oracles}

A parametric oracle is a continuous function $\mathcal{O}_n(x,u) = \chi(x,u;\lambda_n)$ that is parameterized by a set of coefficients $\lambda_n \in \mathcal{T} \subseteq \mathbb{R}^L$, where $\mathcal{T}$ is a set.  This class of learning is often used in adaptive control \cite{sastry1989,astrom1995}.  In the most general case, the function $\chi$ is nonlinear in all its arguments, and it is customary to use a least-squares cost function with input and trajectory data to estimate the parameters
\begin{equation}
\label{eqn:cost}
\hat{\lambda}_n = \arg \textstyle \min_{\lambda \in \mathcal{T}} \textstyle\sum_{j=0}^n(Y_j - \chi(x_j,u_j;\lambda))^2,
\end{equation}
where $Y_i = x_{i+1} - (Ax_i+Bu_i)$.  This can be difficult to compute in real-time because it is generally a nonlinear optimization problem.

\begin{example}
It is common in biochemical networks to have nonlinear terms in the dynamics such as
\begin{equation}
\mathcal{O}_n(x,u) = \lambda_{n,1}\Bigg(\frac{x_1^{\lambda_{n,2}}}{x_1^{\lambda_{n,2}} + \lambda_{n,3}}\Bigg)\Bigg(\frac{\lambda_{n,4}}{u_1^{\lambda_{n,5}} + \lambda_{n,4}}\Bigg),
\end{equation}
where $\lambda_n \in \mathcal{T} \subset \mathbb{R}^5$ are the unknown coefficients in this example.  Such terms are often called Hill equation type reactions \cite{aswani2009b}.
\end{example}

An important subclass of parametric oracles are those that are linear in the coefficients: $\mathcal{O}_n(x,u) = \sum_{i=1}^L\lambda_{n,i}\chi_i(x,u)$, where $\chi_i \in \mathbb{R}^p$ for $i = 1,\ldots,L$ are a set of (possibly nonlinear) functions.  The reason for the importance of this subclass is that the least-squares procedure (\ref{eqn:cost}) is convex in this situation, even when the functions $\chi_i$ are nonlinear.  This greatly simplifies the computation required to solve the least-squares problem (\ref{eqn:cost}) that gives the unknown coefficients $\lambda_n$.

\begin{example}
One special case of linear parametric oracles is when the $\chi_i$ are linear functions.  Here, the oracle can be written as $\mathcal{O}_m(x,u) = F_{\lambda_m}x + G_{\lambda_m}u$, where $F_{\lambda_m},G_{\lambda_m}$ are matrices whose entries are parameters.  The intuition is that this oracle allows for corrections to the values in the $A,B$ matrices of the nominal model; it was used in conjunction with LBMPC on a quadrotor helicopter testbed \cite{aswani_quad_2011,bouffard2011}, in which LBMPC enabled high-performance flight.
\end{example}

\subsection{Nonparametric Oracles}

Nonparametric regression refers to techniques that estimate a function $g(x,u)$ of input variables such as $x,u$, without making \textit{a priori} assumptions about the mathematical form or structure of the function $g$.  This class of techniques is interesting because it allows us to integrate non-traditional forms of adaptation and ``learning'' into LBMPC.  And because LBMPC robustly maintains feasibility and constraint satisfaction as long as $\Omega$ can be computed, we can design or choose the nonparametric regression method without having to worry about stability properties.  This is a specific instantiation of the separation between robustness and performance in LBMPC.

\begin{example}
Neural networks are a classic example of a nonparametric method that has been used in adaptive control \cite{narendra1990,rovithakis1994,anderson2007}, and they can also be used with LBMPC.  There are many particular forms of neural networks, and one specific type is a feedforward neural network with a hidden layer of $k_n$ neurons; it is given by
\begin{equation}
\mathcal{O}_n(x,u) = c_0 + \textstyle\sum_{i = 1}^{k_n}c_i\sigma(a_i'[x'\ u']'+b_i),
\end{equation}
where $a_i \in \mathbb{R}^{p+m}$ and $b_i,c_0,c_i \in \mathbb{R}$ for all $i \in \{1,\ldots,k\}$ are coefficients, and $\sigma(x) = 1/(1 + e^{-x}) : \mathbb{R} \rightarrow [0,1]$ is a sigmoid function \cite{nonparametric2002}.  Note that this is considered a nonparametric method because it does not generally converge unless $k_n \rightarrow \infty$ as $n \rightarrow \infty$.
\end{example}

Designing a nonparametric oracle for LBMPC is challenging because the tool should ideally be an estimator that is bounded to ensure robustness of LBMPC and differentiable to allow for its use with numerical optimization algorithms.  Local linear estimators \cite{ruppertwand1994,aswani2010} are not guaranteed to be bounded, and their extensions that remain bounded are generally non-differentiable \cite{fiacco1976}.  On the other hand, neural networks can be designed to remain bounded and differentiable, but they can have technical difficulties related to the estimation of its coefficients \cite{vapnik1999}.

\subsubsection{Example: $L2$-Regularized Nadaraya-Watson Estimator}

The Nadaraya-Watson (NW) estimator \cite{muller1987,ruppertwand1994}, which can be intuitively thought of as the interpolation of non-uniformly sampled data points by a suitably normalized convolution kernel, is promising because it ensures boundedness.  Our approach to designing a nonparametric estimator for LBMPC is to modify the NW estimator by adding regularization that deterministically ensures boundedness.  Thus, it serves the same purpose as \textit{trimming} \cite{bickel1982}; but the benefit of our approach is that it also deterministically ensures differentiability of the estimator.  To our knowledge, this modification of NW has not been previously considered in the literature.

Define $h_n,\lambda_n \in \mathbb{R}_+$ to be two non-negative parameters; except when we wish to emphasize their temporal dependence, we will drop the subscript $n$ to match the convention of the statistics literature.  Let $X_i = [x_i'\  u_i']'$, $Y_i = x_{i+1} - (Ax_i+Bu_i)$, and $\Xi_i = \|\xi - x_i\|^2/h^2$, where $X_i \in \mathbb{R}^{p+m}$ and $Y_i \in \mathbb{R}^p$ are data and $\xi = [x' \ u']'$ are free variables.  We define any function $\kappa : \mathbb{R} \rightarrow \mathbb{R}_+$ to be a kernel function if it has (a) finite support (i.e., $\kappa(\nu) = 0$ for $|\nu| \geq 1$), (b) even symmetry $\kappa(\nu) = \kappa(-\nu)$, (c) positive values $\kappa(\nu) > 0$ for $|\nu| < 1$, (d) differentiability (denoted by $d\kappa$), and (e) nonincreasing values of $\kappa(\nu)$ over $\nu \geq 0$.  The $L2$-regularized NW (L2NW) estimator is defined as
\begin{equation}
\label{eqn:rnw}
\mathcal{O}_n(x,u) = \frac{\sum_i Y_i \kappa(\Xi_i)}{\lambda + \sum_i \kappa(\Xi_i)},
\end{equation}
where $\lambda \in \mathbb{R}_+$.  If $\lambda = 0$, then (\ref{eqn:rnw}) is simply the NW estimator.  The $\lambda$ term acts to regularize the problem and ensures differentiability.  

There are two alternative characterizations of (\ref{eqn:rnw}).  The first is as the unique minimizer of the parametrized, strictly convex optimization problem $\mathcal{O}_n(x,u) = \arg \min_{\gamma} L(x,u,X_i,Y_i,\gamma)$ for 
\begin{equation}
\label{eqn:onw}
L(x,u,X_i,Y_i,\gamma) = \textstyle\sum_i \kappa(\Xi_i)(Y_i - \gamma)^2 + \lambda\gamma^2.
\end{equation}
Viewed in this way, the $\lambda$ term represents a Tikhonov (or $L2$) regularization \cite{tikhonov1977,hoerl1970}.  The second characterization is as the mean with weights $\{\lambda,\kappa(\Xi_1),\ldots,\kappa(\Xi_n)\}$ for points $\{0, Y_1, \ldots, Y_n\}$, and it is useful for showing the second part of the following theorem about the deterministic properties of the L2NW estimator.

\begin{theorem}
\label{theorem:dnw}
If $0 \in \mathcal{W}$, $\kappa(\cdot)$ is a kernel function, and $\lambda > 0$; then (a) the L2NW estimator $\mathcal{O}_n(x,u)$ as defined in (\ref{eqn:rnw}) is differentiable, and (b) $\mathcal{O}_n(x,u) \in \mathcal{W}$.
\end{theorem}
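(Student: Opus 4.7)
My plan is to prove the two parts separately; both follow rather directly from the structure of (\ref{eqn:rnw}) together with the standing assumptions on $\kappa$ and $\mathcal{W}$, so the main task is to organize the arguments cleanly rather than to introduce new machinery.

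For part (a), I would first observe that the denominator of (\ref{eqn:rnw}) is uniformly bounded below by $\lambda > 0$, since $\kappa \ge 0$ everywhere. Hence the estimator is the quotient of two functions with denominator bounded away from zero. Next I would show that each $\kappa(\Xi_i)$ is a differentiable function of $\xi = [x'\ u']'$: the map $\xi \mapsto \Xi_i = \|\xi - X_i\|^2/h^2$ is a differentiable (indeed smooth) function of $\xi$, and $\kappa$ is differentiable on all of $\mathbb{R}$ by assumption (d), so the composition is differentiable by the chain rule. Finite sums and scalar multiples of differentiable functions are differentiable, and the quotient rule then gives differentiability of $\mathcal{O}_n$.

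For part (b), the key is the second characterization of the L2NW estimator as a weighted mean mentioned right before the theorem. Writing
\begin{equation}
\mathcal{O}_n(x,u) = \frac{\lambda \cdot 0 + \sum_i \kappa(\Xi_i)\, Y_i}{\lambda + \sum_i \kappa(\Xi_i)},
\end{equation}
we see that $\mathcal{O}_n(x,u)$ is a convex combination of the points $\{0, Y_1, \ldots, Y_n\}$ with nonnegative weights $\{\lambda, \kappa(\Xi_1), \ldots, \kappa(\Xi_n)\}$ whose sum is strictly positive (since $\lambda > 0$). By the definition of $Y_i$ and the model (\ref{eqn:smodel}), $Y_i = g(x_i,u_i) \in \mathcal{W}$ for every $i$, and $0 \in \mathcal{W}$ by hypothesis. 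Since $\mathcal{W}$ is a polytope and therefore convex, any convex combination of points in $\mathcal{W}$ lies in $\mathcal{W}$, which yields $\mathcal{O}_n(x,u) \in \mathcal{W}$.

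There is no serious obstacle here; the only place that requires care is ensuring that differentiability is global (not just on the interior of the supports of the $\kappa(\Xi_i)$), but this is handled by assumption (d) that $\kappa$ itself is differentiable on $\mathbb{R}$, together with the strict positivity of the denominator guaranteed by $\lambda > 0$. This last point is precisely the role played by the $L2$ regularization term compared to the unregularized NW estimator, where the denominator could vanish and destroy both differentiability and the convex-combination interpretation.
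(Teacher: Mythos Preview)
Your proof is correct. Part (b) is essentially identical to the paper's argument, invoking the weighted-mean characterization and convexity of $\mathcal{W}$ in the same way. For part (a) you take a slightly different route: you argue directly from the explicit formula (\ref{eqn:rnw}) via the chain rule on each $\kappa(\Xi_i)$ and the quotient rule, using $\lambda>0$ to keep the denominator bounded away from zero. The paper instead uses the variational characterization (\ref{eqn:onw}), noting that $\mathcal{O}_n(x,u)$ is defined implicitly by $\tfrac{dL}{d\gamma}=0$ and then applying the implicit function theorem, with the nonvanishing-Hessian hypothesis supplied by $\lambda+\sum_i\kappa(\Xi_i)>0$. Your approach is more elementary and self-contained; the paper's approach has the virtue of tying differentiability back to the optimization interpretation, which would generalize more readily if the estimator were defined only variationally without a closed-form ratio. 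Either way the essential ingredient is the same: the strict positivity of $\lambda$ guarantees the relevant denominator (equivalently, second derivative of $L$) never vanishes.
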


\begin{proof}
To prove (a), note that the estimate $\mathcal{O}_n(x,u)$ is the value of $\gamma$ that solves $\frac{dL}{d\gamma}(x,u,X_i,Y_i,\gamma) = 0$, where $L(\cdot)$ is from (\ref{eqn:onw}).  Because $\lambda + \sum_i \kappa(\Xi_i) > 0$, the hypothesis of the implicit function theorem is satisfied, and result directly follows from the implicit function theorem.

Part (b) is shown by noting that the assumptions imply that $0,Y_i \in \mathcal{W}$.  If the weights of a weighted mean are positive and have a nonzero sum, then the weighted mean can be written as a convex combination of points.  This is our situation, and so the result follow from the weighted mean characterization of (\ref{eqn:rnw}). 
\end{proof}

\begin{remark}
This shows that L2NW is deterministically bounded and differentiable, which is needed for robustness and numerical optimization, respectively.  We can compute the gradient of L2NW using standard calculus, and its $jk$-th component is given by (\ref{eqn:gnw}) for fixed $X_i,Y_i$.
\begin{figure*}[!t]
\normalsize
\begin{equation}
\label{eqn:gnw}
\frac{\partial\mathcal{O}_n}{\partial \xi_k}\Big(x,u\Big) = \frac{\{\textstyle\sum_i [Y_i]_j\cdot d\kappa(\Xi_i)\cdot \Xi_i\cdot [\xi - X_i]_k\}\{\lambda + \textstyle\sum_i \kappa(\Xi_i)\} - \{\textstyle\sum_i [Y_i]_j\kappa(\Xi_i)\}\{\textstyle\sum_i d\kappa(\Xi_i)\cdot \Xi_i\cdot [\xi - X_i]_k\}}{h^2\{\lambda + \sum_i \kappa(\Xi_i)\}^2/2}.
\end{equation}
\hrulefill
\vspace*{4pt}
\end{figure*}
\end{remark}

There are few notes regarding numerical computation of L2NW.  First, picking the parameters $\lambda,h$ in a data-driven manner \cite{efron1979,shao1993} is too slow for real-time implementation, and so we suggest rules of thumb: Deterministic regularity is provided by Theorem \ref{theorem:dnw} for any positive $\lambda$ (e.g., 1e-3), and we conjecture using $h_n = O(n^{-1/(p+m)})$ because random samples cover $\mathcal{X}\times\mathcal{U} \subseteq \mathbb{R}^{p+m}$ at this rate.  Second, computational savings are possible through careful software coding, because if $h$ is small, then most terms in the summations of (\ref{eqn:onw}) and (\ref{eqn:gnw}) will be zero because of the finite support of $\kappa(\cdot)$.

\subsection{Stochastic Epi-convergence}

It remains to be shown that if $\mathcal{O}_n(x,u)$ stochastically converges to the true model $g(x,u)$, then the control law of the LBMPC scheme will stochastically converge to that of an MPC that knows the true model.  The main technical problem occurs because $\mathcal{O}_n$ is time-varying, and so the control law is given by the minimizer of an LBMPC optimization problem that is different at each point in time $n$.  This presents a problem because pointwise convergence of $\mathcal{O}_n$ to $g$ is generally insufficient to prove convergence of the minimizers of a sequence of optimization problems to the minimizer of a limiting optimization problem \cite{rockafellarwets1998,vogellachout2003a}.

A related notion called epi-convergence is sufficient for showing convergence of the control law.  Define the \textit{epigraph} of $f_n(\cdot,\omega)$ to be the set of all points lying on or above the function, and denote it as $\text{Epi }f_n(\cdot,\omega) = \{(x,\mu) : \mu \geq f_n(x,\omega)\}$.  To prove convergence of the sequence of minimizers, we must show that the epigraph of the cost function (and constraints) of the sequence of optimizations converges in probability to the epigraph of the cost function (and constraints) in the limiting optimization problem.  This notion is called epi-convergence, and we denote it as $f_n \xrightarrow[\mathcal{X}]{l-prob.} f_0$.

For simplicity, we will assume in this section that the cost function is time-invariant (i.e., $\psi_n \equiv \psi_0$).  It is enough to cite the relevant results for our purposes, but the interested reader can refer to \cite{rockafellarwets1998,vogellachout2003a} for details. 
\begin{theorem}[{Theorem 4.3 \cite{vogellachout2003a}}]
Let $\tilde{\psi}_n$ and $\phi$ be as defined in Lemma \ref{lemma:cont}, and define $\tilde{\psi}_0$ to be the composition of (\ref{eqn:glmpc}) with both (\ref{eqn:ic}) and $x_{n+i+1}(x_{n+i},u_{n+i}) = Ax_{n+i} + Bu_{n+i} + g(x_{n+i},u_{n+i})$.  If $\tilde{\psi}_n \xrightarrow[\phi(x_n)]{l-prob.} \tilde{\psi}_0$ for all $\{x_n : \phi(x_n) \neq \emptyset\}$, then the set of minimizers converges
\begin{multline}
\arg\min\{\tilde{\psi}_n | (c,\theta) \in \phi(x_n)\} \\ \xrightarrow{p} \arg\min\{\tilde{\psi}_0 | (c,\theta) \in \phi(x_n)\}.
\end{multline}
\end{theorem}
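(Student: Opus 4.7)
The plan is to obtain the stochastic statement as a lift of the classical deterministic principle that epi-convergence combined with inf-compactness implies convergence of the argmin sets (cf.\ Theorem~7.31 of \cite{rockafellarwets1998}). Two structural features of the LBMPC problem make the setup favorable. First, the constraint map $\phi(x_n)$ is cut out by linear inequalities within the polytopes $\mathcal X,\mathcal U$, so for each $x_n$ the feasible set is compact and the minimizer set is contained in a fixed compact region $K$ depending only on $\mathcal X,\mathcal U$ and the LBMPC data. Second, the cost $\tilde\psi_0$ is continuous, which I would establish exactly as in Lemma~\ref{lemma:cont} (continuous composition of the propagated true dynamics with the continuous stage cost). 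The strategy is to use the subsequence principle for convergence in probability to reduce to a deterministic epi-convergence argument on a full-measure event, then transport the argmin conclusion back into probability.

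The deterministic core runs as follows. Fix a realization on which $\tilde\psi_n$ epi-converges to $\tilde\psi_0$ on $\phi(x_n)$. Let $z_n^* = (c_n^*,\theta_n^*)$ be any selection of minimizers; by compactness of $K$, every subsequence admits a cluster point $z^*$. The lower inequality of Painlev\'e--Kuratowski epi-convergence gives $\tilde\psi_0(z^*)\leq \liminf_n \tilde\psi_n(z_n^*)$ along the convergent subsequence. For any competitor $z\in\phi(x_n)$, the upper inequality produces a recovery sequence $z_n\to z$ with $\limsup_n \tilde\psi_n(z_n)\leq \tilde\psi_0(z)$. Minimality of $z_n^*$ yields $\tilde\psi_n(z_n^*)\leq \tilde\psi_n(z_n)$, and chaining the two inequalities gives $\tilde\psi_0(z^*)\leq \tilde\psi_0(z)$, so $z^*\in\arg\min\tilde\psi_0$. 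The reverse inclusion (that every minimizer of $\tilde\psi_0$ is approximable by minimizers of $\tilde\psi_n$) is obtained by applying the recovery sequence inequality to such a minimizer and using that $\tilde\psi_n(z_n)-\tilde\psi_n(z_n^*)\to 0$ forces $z_n$ to be asymptotically optimal. This delivers set convergence of argmins in the Painlev\'e--Kuratowski sense on the realization.

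To lift this to convergence in probability, I would use the standard characterization that $A_n\xrightarrow{p} A$ iff every subsequence contains a further subsequence along which $A_n\to A$ almost surely. Given an arbitrary subsequence, I extract a sub-subsequence along which both epi-inequalities hold almost surely at every point of a fixed countable dense subset $D\subset K$; such a sub-subsequence exists by a diagonal extraction from the pointwise in-probability convergence of the lower and upper envelopes that encode $l$-convergence. On the resulting full-measure event, I then use the continuity of $\tilde\psi_0$ to pass from $D$ to arbitrary points in $K$, so the deterministic argmin argument of the previous paragraph applies, and the argmin sets converge almost surely along this sub-subsequence. The subsequence principle then yields convergence in probability of the argmins, which is the conclusion.

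The main obstacle is the simultaneous handling of all test points $z\in K$ on a single full-measure event. The epi-convergence hypothesis is formulated pointwise in probability, but the deterministic argmin argument requires both Painlev\'e--Kuratowski inequalities to hold jointly for a sufficiently rich family of $z$. The remedy is the diagonalization along $D$ described above combined with continuity of $\tilde\psi_0$ to extend from $D$ to $K$; this is precisely the technical content packaged by the stochastic epi-convergence machinery of \cite{vogellachout2003a} and is what allows the transfer from deterministic to probabilistic argmin convergence to go through without any measurable selection difficulties, since the compactness of $K$ obviates the need for existence-of-selections arguments that complicate more general stochastic variational analysis.
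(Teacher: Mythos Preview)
The paper does not prove this theorem. It is quoted verbatim as Theorem~4.3 of \cite{vogellachout2003a}, and the surrounding text says explicitly that ``it is enough to cite the relevant results for our purposes, but the interested reader can refer to \cite{rockafellarwets1998,vogellachout2003a} for details.'' So there is no proof in the paper to compare against; your proposal supplies an argument where the authors chose to invoke the literature.

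As to the content of your sketch: the deterministic core and the subsequence lift are the right architecture, and your observation that $\phi(x_n)$ is a polytope (hence compact) is exactly the inf-compactness ingredient needed to make the Rockafellar--Wets machinery fire. One point deserves care. Your ``reverse inclusion'' paragraph claims that every minimizer of $\tilde\psi_0$ is approximable by minimizers of $\tilde\psi_n$, arguing that a recovery sequence $z_n$ for such a minimizer is asymptotically optimal because $\tilde\psi_n(z_n)-\tilde\psi_n(z_n^*)\to 0$. But asymptotic optimality of the \emph{value} does not by itself force $z_n$ to lie near $\arg\min\tilde\psi_n$; without a growth condition or uniqueness of the limiting minimizer, epi-convergence generally yields only outer semicontinuity of the argmin map, i.e.\ $\limsup_n \arg\min\tilde\psi_n \subseteq \arg\min\tilde\psi_0$. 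This is in fact the form of the conclusion in \cite{rockafellarwets1998} (Theorem~7.31) and in \cite{vogellachout2003a}; full Painlev\'e--Kuratowski set convergence is not claimed without further hypotheses. If you intend to recover the two-sided statement you should either add a uniqueness assumption on the limiting minimizer (which the paper later obtains from strict convexity of $\psi_0$ in the linear-MPC comparison) or weaken the conclusion to outer set convergence in probability, which is what the cited source actually delivers.
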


\begin{remark}
The intuition is that if the cost function $\psi_n$ composed with the oracle $\mathcal{O}_n(x,u)$ converges in the appropriate manner to $\psi_0$ composed with the true dynamics $g(x,u)$; then we get convergence of the minimizers of LBMPC to those of the MPC with true model, and the control law (\ref{eqn:feedbackmpc}) converges.  This theorem can be used to prove convergence of the LBMPC control law.
\end{remark}


\subsection{Epi-convergence for Parametric Oracles}

Sufficient excitation (SE) is an important concept in system identification, and it intuitively means that the control inputs and state trajectory of the system are such that all modes of the system are activated.  In general, it is hard to design a control scheme that ensures this \textit{a priori}, which is a key aim of reinforcement learning \cite{barto2004}.  However, LBMPC provides a framework in which SE may be able to be designed.  Because we have a nominal model, we can in principle design a reference trajectory that sufficiently explores the state-input space $\mathcal{X}\times\mathcal{U}$.


Though designing a controller that ensures SE can be difficult, checking \textit{a posteriori} whether a system has SE is straightforward \cite{ljung1987,aswani2009,aswani2010}.  In this section, we assume SE and leave open the problem of how to design reference trajectories for LBMPC that guarantee SE.  This is not problematic from the standpoint of stability and robsutness, because LBMPC provides these properties, even without SE, whenever the conditions in Sect. \ref{section:lbmpc} hold.  We have convergence of the control law assuming SE, statistical regularity, and that the oracle can correctly model $g(x,u)$.  The proof of the following theorem can be found in \cite{aswani2011_techrep}

\begin{theorem}
Suppose there exists $\lambda_0 \in \mathcal{T}$ such that $g(x,u) = \chi(x,u;\lambda_0)$.  If the system has SE \cite{lai1979,jennrich1969,malinvaud1970}, then the control law of the LBMPC with oracle (\ref{eqn:cost}) converges in probability to the control law of an MPC that knows the true model (i.e., $u_n[\mathcal{M}_n^*] \xrightarrow{p} u_0[\mathcal{M}_0^*]$).
\end{theorem}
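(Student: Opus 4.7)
The plan is to chain together three well-known facts: (i) consistency of the nonlinear least-squares estimator under sufficient excitation, (ii) propagation of parameter convergence through the Lipschitz recursion of the learned model to the composed cost $\tilde{\psi}_n$, and (iii) the epi-convergence theorem cited immediately above the statement. First, under the correct-specification hypothesis $g(x,u) = \chi(x,u;\lambda_0)$ and sufficient excitation, the classical results of Jennrich, Malinvaud, and Lai quoted in the statement give consistency of the least-squares estimator from (\ref{eqn:cost}), i.e.\ $\hat{\lambda}_n \xrightarrow{p} \lambda_0$. Continuity of $\chi$ in $\lambda$ then immediately lifts this to pointwise convergence $\mathcal{O}_n(x,u) \xrightarrow{p} g(x,u)$ for every fixed $(x,u)$.

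Second, I would upgrade pointwise to uniform convergence on the compact set $\mathcal{X}\times\mathcal{U}$. Since $\chi$ is (jointly) continuous, it is uniformly continuous on any compact neighborhood of $\lambda_0$ in $\mathcal{X}\times\mathcal{U}\times\mathcal{T}$; combined with $\hat{\lambda}_n \xrightarrow{p} \lambda_0$, an $\varepsilon/3$-type argument yields $\sup_{(x,u)\in\mathcal{X}\times\mathcal{U}} \|\mathcal{O}_n(x,u) - g(x,u)\| \xrightarrow{p} 0$. This uniform estimate is then pushed through the $N$-step recursion (\ref{eqn:nle}) by induction: each step is a Lipschitz composition of $A$, $B$ and $\mathcal{O}_n$ evaluated on the compact feasible set (which is compact because $\mathcal{X},\mathcal{U},\Omega$ are compact polytopes, the disturbance polytopes $\mathcal{R}_i$ are compact, and $c,\theta$ are confined accordingly). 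The result is that the predicted learned trajectory $\tilde{x}_{n+i}$ converges in probability to the true trajectory, uniformly over all feasible $(c,\theta)$. Because $\psi_n \equiv \psi_0$ is Lipschitz in its arguments on the feasible set, this gives
\begin{equation*}
\sup_{(c,\theta)\in\phi(x_n)} |\tilde{\psi}_n(c,\theta) - \tilde{\psi}_0(c,\theta)| \xrightarrow{p} 0.
\end{equation*}
Uniform convergence in probability of continuous functions on a compact domain implies epi-convergence in probability, so the hypothesis $\tilde{\psi}_n \xrightarrow[\phi(x_n)]{l\text{-}prob.} \tilde{\psi}_0$ of the preceding theorem is verified.

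Third, applying that theorem yields convergence in probability of the minimizers $\mathcal{M}_n^* \xrightarrow{p} \mathcal{M}_0^*$. Combining this with the explicit feedback formula (\ref{eqn:feedbackmpc}), $u_n[\mathcal{M}_n^*] = Kx_n + c_n[\mathcal{M}_n^*]$, together with the fact that $c_n$ is a coordinate of $\mathcal{M}_n^*$, gives the claimed convergence $u_n[\mathcal{M}_n^*] \xrightarrow{p} u_0[\mathcal{M}_0^*]$.

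The main obstacle I anticipate is the possibility that the limiting MPC problem has a non-unique minimizer; epi-convergence only guarantees that cluster points of $\mathcal{M}_n^*$ lie in the optimal set, which need not pin down a single $u_0[\mathcal{M}_0^*]$. The cleanest way to dispose of this is to assume, as in Theorem \ref{theorem:ras}, that $\psi_0$ is strictly convex in its optimization arguments, so that $\mathcal{M}_0^*$ is unique and the convergence of minimizers becomes convergence to a single point; otherwise the statement should be interpreted in the set-valued (Hausdorff) sense. A secondary technical point is verifying that the feasible-set correspondence $\phi(x_n)$ is nonempty and continuous at the relevant $x_n$, which was already shown in Lemma \ref{lemma:cont} via linearity of the defining constraints and therefore carries over without extra work.
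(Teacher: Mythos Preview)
Your proposal is correct and follows exactly the epi-convergence template the paper assembles in the subsection immediately preceding the statement (reduce to Theorem~4.3 of \cite{vogellachout2003a} by showing $\tilde{\psi}_n \xrightarrow[\phi(x_n)]{l\text{-}prob.} \tilde{\psi}_0$); the paper itself does not prove this theorem in-text but defers to the technical report \cite{aswani2011_techrep}, so a line-by-line comparison is impossible, though the surrounding scaffolding makes clear the intended route is the one you take. Your flag on non-uniqueness of the limiting minimizer is the only genuine subtlety, and your suggested resolution (strict convexity as in Theorem~\ref{theorem:ras}, or else a set-valued reading of the conclusion) is the right one.
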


\subsection{Epi-convergence for Nonparametric Oracles}

For a nonlinear system, SE is usually defined using ergodicity or mixing, but this is hard to verify in general.  Instead, we define SE as a finite sample cover (FSC) of $\mathcal{X}$.  Let $\mathcal{B}_h(x) = \{y : \|x-y\| \leq h\}$ be a ball centered at $x$ with radius $h$, then a FSC of $\mathcal{X}$ is a set $\mathcal{S}_h = \bigcup_i \mathcal{B}_{h/2}(X_i)$ that satisfies $\mathcal{X} \subseteq \mathcal{S}_h$.  The intuition is that $\{X_i\}$ sample $\mathcal{X}$ with average, inter-sample distance less than $h/2$.  

Our first result considers a generic nonparametric oracle with uniform pointwise convergence.  Such uniform convergence implicitly implies SE in the form of a FSC with asymptotically decreasing radius $h$ \cite{zakai2008}, though we make this explicit in our statement of the result.  A proof can be found in \cite{aswani2011_techrep}.
\begin{theorem}
Let $h_n$ be some sequence such that $h_n \rightarrow~0$.  If $\mathcal{S}_{h_n}$ is a FSC of $\mathcal{X} \times \mathcal{U}$ and
\begin{equation}
\sup_{\mathcal{X}\times\mathcal{U}}\|\mathcal{O}_n(x,u)-g(x,u)\| = O_p(r_n),
\end{equation}
with $r_n \rightarrow 0$; then the control law of LBMPC with $\mathcal{O}_n(x,u)$ converges in probability to the control law of an MPC that knows the true model (i.e., $u_n[\mathcal{M}_n^*] \xrightarrow{p} u_0[\mathcal{M}_0^*]$).
\end{theorem}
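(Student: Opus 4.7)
The plan is to apply the Vogel-Lachout epi-convergence theorem (Theorem~3 above). The constraint function $\phi(x_n)$ introduced in Lemma~\ref{lemma:cont} is deterministic and identical for the LBMPC problem and the limiting MPC problem, because the LBMPC constraints (\ref{eqn:lc}) depend only on the nominal linear model (\ref{eqn:linear}) and the robust tubes $\mathcal{R}_i$, never on the oracle $\mathcal{O}_n$. Hence the only remaining hypothesis of Theorem~3 requiring verification is the epi-convergence in probability $\tilde{\psi}_n \xrightarrow[\phi(x_n)]{l-prob.} \tilde{\psi}_0$, where $\tilde{\psi}_0$ is the composition of the cost with the true dynamics (\ref{eqn:smodel}) iterated $N$ steps from $x_n$.

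The first main step is to promote the oracle's uniform convergence into uniform convergence of the learned trajectory across the horizon. For any feasible $(c,\theta)\in\phi(x_n)$, let $\tilde{x}_{n+i}$ be generated by (\ref{eqn:nle}) and $x_{n+i}$ by (\ref{eqn:smodel}) from the same initial condition and the same control sequence. The one-step error obeys
$$\tilde{x}_{n+i+1} - x_{n+i+1} = A(\tilde{x}_{n+i} - x_{n+i}) + \bigl(\mathcal{O}_n(\tilde{x}_{n+i},\check{u}_{n+i}) - g(x_{n+i}, \check{u}_{n+i})\bigr).$$
I would bound the last term by $\sup_{\mathcal{X}\times\mathcal{U}}\|\mathcal{O}_n - g\|$ plus $\omega_g(\|\tilde{x}_{n+i} - x_{n+i}\|)$, where $\omega_g$ is the modulus of continuity of $g$. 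Because $g$ is continuous on the compact set $\mathcal{X}\times\mathcal{U}$, it is uniformly continuous, and a finite induction on $i\leq N$ yields
$$\max_{i \leq N}\, \sup_{(c,\theta)\in\phi(x_n)}\|\tilde{x}_{n+i} - x_{n+i}\| = O_p(r_n).$$
A subtlety is that the induction requires $\tilde{x}_{n+i}\in\mathcal{X}$ for the sup-norm bound on $\mathcal{O}_n-g$ to apply; once $n$ is large enough that $r_n$ is small, robust feasibility (Theorem~\ref{theorem:robustmpc}) places $x_{n+i}$ in $\mathcal{X}$ and confines $\tilde{x}_{n+i}$ to a small neighborhood thereof, which can be absorbed by working with a slight enlargement of $\mathcal{X}\times\mathcal{U}$ on which $g$ is still continuous.

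Lipschitz continuity of $\psi_n$ in its arguments then transfers trajectory convergence into the uniform cost bound $\sup_{(c,\theta)\in\phi(x_n)}|\tilde{\psi}_n - \tilde{\psi}_0| = O_p(r_n)$. Since $\phi(x_n)$ is compact and both cost functions are continuous, uniform convergence in probability implies epi-convergence in probability (cf.\ Ch.~7 of \cite{rockafellarwets1998}), so the hypothesis of Theorem~3 is established. An application of Theorem~3 now delivers $\mathcal{M}_n^* \xrightarrow{p} \mathcal{M}_0^*$ in the sense of set convergence of minimizers, and because the control law (\ref{eqn:feedbackmpc}) is a continuous (indeed linear) function of the first block $c_n$ of the minimizer, the continuous mapping theorem gives $u_n[\mathcal{M}_n^*] \xrightarrow{p} u_0[\mathcal{M}_0^*]$.

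I expect the hardest step to be the rigorous passage from uniform convergence in probability to epi-convergence in probability together with the interpretation of minimizer-set convergence when $\tilde{\psi}_0$ is not strictly convex; strict convexity would make the limit minimizer unique and turn the conclusion into ordinary convergence of points. The remaining conditions of the theorem (FSC of $\mathcal{S}_{h_n}$ and $h_n\to 0$) do not enter the argument directly: they are the side conditions under which the assumed sup-norm rate $O_p(r_n)$ is achievable for concrete estimators such as L2NW, but once that rate is in hand the epi-convergence machinery does the rest.
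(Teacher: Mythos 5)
The paper does not actually contain its own proof of this theorem --- it is deferred to the technical report \cite{aswani2011_techrep} --- but the route you take is precisely the one the paper telegraphs: verify epi-convergence in probability of the composed cost $\tilde{\psi}_n$ to $\tilde{\psi}_0$ over the oracle-independent feasible set $\phi(x_n)$, then invoke the cited Vogel--Lachout result to pass to convergence of minimizers and hence of the control law. Your outline is sound, and your closing observation that the FSC hypothesis is not load-bearing once uniform convergence is assumed matches the paper's own remark that uniform convergence already implicitly implies SE. Three soft spots are worth tightening. First, your claimed rate $\max_i \sup_{(c,\theta)}\|\tilde{x}_{n+i}-x_{n+i}\| = O_p(r_n)$ does not follow from mere continuity of $g$: the recursion $e_{i+1}\leq \|A\|e_i + O_p(r_n) + \omega_g(e_i)$ only yields $e_i = o_p(1)$ unless $\omega_g$ is linear (i.e., $g$ Lipschitz); fortunately $o_p(1)$ is all the conclusion needs, so this is an over-claim rather than a gap, but state it as such. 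Second, the domain issue you flag is not fully resolved by ``enlarging $\mathcal{X}\times\mathcal{U}$'': the hypothesis controls $\mathcal{O}_n - g$ only on $\mathcal{X}\times\mathcal{U}$, and the constraints (\ref{eqn:lc}) restrict $\overline{x}$, not $\tilde{x}$, so you must either argue that with probability tending to one the learned predicted trajectory remains in $\mathcal{X}$ (using that $\mathcal{O}_n$ is eventually within a shrinking neighborhood of $\mathcal{W}$ on $\mathcal{X}\times\mathcal{U}$ and that the true predicted trajectory is interior), or additionally assume the oracle is $\mathcal{W}$-valued as L2NW is. Third, the limiting problem $\tilde{\psi}_0$ is generically non-convex, so the Vogel--Lachout theorem delivers convergence of $\arg\min$ sets; extracting pointwise convergence $u_n[\mathcal{M}_n^*]\xrightarrow{p} u_0[\mathcal{M}_0^*]$ requires uniqueness of the limiting minimizer (or an interpretation of the conclusion as set convergence), which you correctly identify as the delicate step but should surface as an explicit hypothesis rather than leave implicit.
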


\begin{remark}
Our reason for presenting this result is that this theorem may be useful for proving convergence of the control law when using types of nonparametric regression that are more complex than L2NW.  However, we stress that this is a sufficient condition, and so it may be possible for nonparametric tools that do not meet this condition to generate such stochastic convergence of the controller.
\end{remark}

Assuming SE in the form of a FSC with asymptotically decreasing radius $h$, we can show that the control law of LBMPC that uses L2NW converges to that of an MPC that knows the true dynamics.  Because the proofs \cite{aswani2011_techrep} rely upon theory from probability and statistics, we simply summarize the main result.
\begin{theorem}
Let $h_n$ be some sequence such that $h_n \rightarrow 0$.  If $\mathcal{S}_{h_n}$ is a FSC of $\mathcal{X} \times \mathcal{U}$, $\lambda = O(h_n)$, and $g(x,u)$ is Lipschitz continuous; then the control law of LBMPC with L2NW converges in probability to the control law of an MPC that knows the true model (i.e., $u_n[\mathcal{M}_n^*] \xrightarrow{p} u_0[\mathcal{M}_0^*]$).
\end{theorem}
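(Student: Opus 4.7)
The plan is to reduce the statement to the preceding theorem on uniform convergence. That theorem shows that control-law convergence in probability follows once one has $\sup_{\mathcal{X}\times\mathcal{U}}\|\mathcal{O}_n(x,u) - g(x,u)\| = O_p(r_n)$ for some $r_n \to 0$. Thus the entire task becomes establishing uniform convergence of the L2NW estimator to $g$, and I expect to prove this deterministically at rate $O(h_n)$, from which $O_p(h_n)$ follows trivially.

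The first step is to decompose the estimation error. Under the no-measurement-noise assumption from Section 2, the regressands satisfy $Y_i = g(X_i)$ exactly, so the definition of L2NW gives
$$
\mathcal{O}_n(x,u) - g(x,u) = \frac{\sum_i [g(X_i) - g(x,u)] \kappa(\Xi_i) - \lambda g(x,u)}{\lambda + \sum_i \kappa(\Xi_i)}.
$$
I would then bound the denominator from below and the numerator from above, uniformly in $\xi = [x'\ u']' \in \mathcal{X}\times\mathcal{U}$. For the denominator, the FSC property yields, for every such $\xi$, some sample $X_i$ with $\|X_i - \xi\| \leq h_n/2$, hence $\Xi_i \leq 1/4$; the positivity and nonincreasing properties of the kernel then force $\kappa(\Xi_i) \geq \kappa(1/4) > 0$, which gives the uniform lower bound $\lambda + \sum_i \kappa(\Xi_i) \geq \kappa(1/4)$. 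For the numerator, only samples with $\|X_i - \xi\| < h_n$ contribute (by finite support of $\kappa$), and Lipschitz continuity of $g$ with some constant $L$ gives $\|g(X_i) - g(x,u)\| \leq L h_n$ for every contributing $i$. Combined with the bound $\|g\| \leq G$ on the compact set $\mathcal{X}\times\mathcal{U}$, these estimates produce
$$
\sup_{\mathcal{X}\times\mathcal{U}}\|\mathcal{O}_n(x,u) - g(x,u)\| \leq L h_n + \frac{\lambda G}{\kappa(1/4)}.
$$
Invoking the hypothesis $\lambda = O(h_n)$ then collapses the right-hand side to $O(h_n)$, and applying the generic epi-convergence theorem for nonparametric oracles delivers the desired convergence of the control law.

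The main obstacle is essentially bookkeeping: correctly matching the FSC radius $h_n/2$ to the kernel's normalization $\Xi_i = \|\xi - X_i\|^2/h_n^2$, ensuring that the product-space FSC covers the full domain of the oracle, and verifying that the implicit constants in $O(h_n)$ are uniform in $n$ so that the deterministic bound indeed implies the stochastic rate required by the generic theorem. No genuine probabilistic machinery is invoked, since the no-noise assumption renders the estimator error a purely deterministic expression in the samples; the stochastic framing in the conclusion is inherited only through the format of the earlier theorem.
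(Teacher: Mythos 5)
Your argument is sound under the assumptions actually stated in this paper, but be aware that the paper never prints a proof of this theorem: it defers to the technical report \cite{aswani2011_techrep} and says only that the argument ``relies upon theory from probability and statistics.'' Your route is the one the paper's structure invites --- verify the hypothesis $\sup_{\mathcal{X}\times\mathcal{U}}\|\mathcal{O}_n(x,u)-g(x,u)\| = O_p(r_n)$ of the preceding generic theorem --- and the individual steps check out: the bias decomposition is algebraically correct; the FSC with radius $h_n/2$ gives some $X_i$ with $\Xi_i \le 1/4$ and hence the denominator bound $\lambda + \sum_i \kappa(\Xi_i) \ge \kappa(1/4) > 0$ by positivity and monotonicity of $\kappa$; finite support of $\kappa$ restricts the numerator sum to samples with $\|X_i - \xi\| < h_n$, where Lipschitz continuity gives the $L h_n$ bound; and boundedness of $g$ is free since $g(x,u) \in \mathcal{W}$, so the $\lambda G/\kappa(1/4) = O(h_n)$ term is controlled by the hypothesis $\lambda = O(h_n)$. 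Two caveats. First, your claim that no probabilistic machinery is needed rests entirely on the no-measurement-noise idealization $Y_i = g(X_i)$; with noise the numerator acquires a stochastic term $\sum_i \epsilon_i \kappa(\Xi_i)$ whose uniform control over $\xi$ is precisely where the probabilistic theory the authors allude to must enter, so your deterministic $O(h_n)$ rate is special to this setting and the $O_p$ packaging is, as you note, inherited only from the format of the generic theorem. Second, the remark following the generic theorem suggests the authors regard that result as a tool for estimators \emph{more complex} than L2NW, so their own proof may establish epi-convergence of the composed cost directly rather than passing through a sup-norm bound on the oracle; your reduction is nevertheless legitimate because the generic theorem's hypotheses (an FSC of $\mathcal{X}\times\mathcal{U}$ plus uniform $O_p(r_n)$ convergence) are exactly what you establish, and what each approach buys is that yours is more elementary while a direct epi-convergence argument would extend to noisy data where no uniform deterministic bound exists.
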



\section{Experimental and Numerical Results}

In this section, we briefly discuss applications in which LBMPC has been experimentally applied to different testbeds.  The section concludes with numerical simulations that display some of the features of LBMPC.

\subsection{Energy-efficient Building Automation}

We have implemented LBMPC on two testbeds that were built on the Berkeley campus for the purpose of study energy-efficient control of heating, ventilation, and air-conditioning (HVAC) equipment.  The first testbed \cite{aswani_proc}, which is named the Berkeley Retrofitted and Inexpensive HVAC Testbed for Energy Efficiency (BRITE), is a single-room that uses HVAC equipment that is commonly found in homes.  LBMPC was able to generate up to 30\% energy savings on warm days and up to 70\% energy savings on cooler days, as compared to the existing control of the thermostat within the room.  It achieved this by using semiparametric regression to be able to estimate, using only temperature measurements from the thermostat, the heating load from exogenous sources like occupants, equipment, and solar heating.  The LBMPC used this estimated heating load as its form of learning, and was able to adjust the control action of the HVAC based on this in order to achieve large energy savings.

The second testbed \cite{aswani2012nmpc}, which is named BRITE in Sutardja Dai Hall (BRITE-S), is a seven floor office building that is used in multiple ways.  The building has offices, classrooms, an auditorium, laboratory space, a kitchen, and a coffee shop with dining area.  Using a variant of LBMPC for hybrid systems with controlled switches, we were able to achieve an average of 1.5MWh of energy savings per day.  For reference, eight days of energy savings is enough to power an average American home for one year.  Again, we used semiparametric regression to be able to estimate, using only temperature measurements from the building, the heating load from exogenous sources like occupants, equipment, and solar heating.  The LBMPC used this estimated heating load along with additional estimates of unmodeled actuator dynamics, as its form of learning, in order to adjust its supervisory control action.

\subsection{High Performance Quadrotor Helicopter Flight}

We have also used LBMPC in order to achieve high performance flight for semi-autonomous systems such as a quadrotor helicopter, which is a non-traditional helicopter with four propellers that enable improved steady-state stability properties \cite{HWT08}.  In our experiments with LBMPC on this quadrotor testbed \cite{aswani_quad_2011,bouffard2011}, the learning was implemented using an extended Kalman filter (EKF) that provided corrections to the coefficients in the $A,B$ matrices.  This makes it similar to LPV-MPC, which performs linear MPC using a successive series of linearizations of a nonlinear model; in our case, we used the learning provided by the EKF to in effect perform such linearizations.

Various experiments that we conducted showed that LBMPC improved performance and provided robustness.  Amongst the experiments we performed were those that (a) showed improved step responses with lower amounts of overshoot and settling time as compared to linear MPC, and (b) displayed the ability of the LBMPC controller to overcome a phenomenon known as the ground effect that typically makes flight paths close to the ground difficult to perform.  Furthermore, the LBMPC displayed robustness by preventing crashes into the ground during experiments in which the EKF was purposely made unstable in order to mis-learn.  The improved performance and learning generalization possible with the type of adaptation and learning within LBMPC was demonstrated with an integrated experiment in which the quadrotor helicopter caught ping-pong balls that were thrown to it by a human.

\subsection{Example: Moore-Greitzer Compressor Model}

Here, we present a simulation of LBMPC on a nonlinear system for illustrative purposes.  The compression system of a jet engine can exhibit two types of instability: rotating stall and surge \cite{mooregreitzer1986,epstein1989,kristic1995}.  Rotating stall is a rotating region of reduced air flow, and it degrades the performance of the engine.  Surge is an oscillation of air flow that can damage the engine.  Historically, these instabilities were prevented by operating the engine conservatively.  But better performance is possible through active control schemes \cite{epstein1989,kristic1995}.

The Moore-Greitzer model is an ODE model that describes the compressor and predicts surge instability
\begin{equation}
\label{eqn:mg}
\begin{aligned}
\dot{\Phi} &= -\Psi + \Psi_c + 1 + 3\Phi/2-\Phi^3/2\\
\dot{\Psi} &= (\Phi + 1 - r \sqrt{\Psi})/\beta^2,
\end{aligned}
\end{equation}
where $\Phi$ is mass flow, $\Psi$ is pressure rise, $\beta > 0$ is a constant, and $r$ is the throttle opening.  We assume $r$ is controlled by a second order actuator with transfer function $r(s) = \frac{w_n^2}{s^2 + 2\zeta w_n s + w_n^2}u(s)$, where $\zeta$ is the damping coefficient, $w_n^2$ is the resonant frequency, and $u$ is the input.

We conducted simulations of this system with the parameters $\beta = 1$, $\Psi_c = 0$, $\zeta = 1/\sqrt{2}$, and $w_n = \sqrt{1000}$.  We chose state constraints $0 \leq \Phi \leq 1$ and $1.1875 \leq \Psi \leq 2.1875$, actuator constraints $0.1547 \leq r \leq 2.1547$ and $-20 \leq \dot{r} \leq 20$, and input constraints $0.1547 \leq u \leq 2.1547$.  For the controller design, we took the approximate model with state $\delta x = [\delta\Phi \ \delta\Psi \ \delta r \ \delta \dot{r}]'$ to be the exact discretization (with sampling time $T = 0.01$) of the linearization of (\ref{eqn:mg}) about the equilibrium $x_0 = [\Phi_0 \ \Psi_0 \ r_0 \ \dot{r}_0]' = [0.5000 \ 1.6875 \ 1.1547 \ 0]'$; the control is $u_n = \delta u_n + u_0$, where $u_0 \equiv r_0$.  The linearization and approximate model are unstable, and so we picked a nominal feedback matrix $K = [-3.0741 \ 2.0957 \ 0.1195 \ -0.0090]$ that stabilizes the system by ensuring that the poles of the closed-loop system $x_{n+1} = (A+BK)x_n$ were placed at $\{0.75,0.78,0.98,0.99\}$.  These particular poles were chosen because they are close to the poles of the open-loop system, while still being stable.

For the purpose of computing the invariant set $\Omega$, we used the algorithm in \cite{kolmanovsky1998}.  This algorithm uses the modeling error set $\mathcal{W}$ as one of its inputs.  This set $\mathcal{W}$ was chosen to be a hypercube that encompasses both a bound on the linearization error, derived using the Taylor remainder theorem applied to the true nonlinear model, along with a small amount of subjectively-chosen ``safety margin'' to provide protection against the effect of numerical errors.

We compared the performance of linear MPC, nonlinear MPC, and LBMPC with L2NW for regulating the system about the operating point $x_0$, by conducting a simulation starting from initial condition $[\Phi_0-0.35 \ \Psi_0-0.40 \ r_0 \ 0]'$.  The horizon was chosen to be $N = 100$, and we used the cost function (\ref{eqn:lincost}), with $Q=\mathbb{I}_4$, $R = 1$, $T = 1e3$, and $P$ that solves the discrete-time Lyapunov equation.  The L2NW used an Epanechnikov kernel (CITE), with parameter values $h=0.5$, $\lambda=\text{1e-3}$ and data measured as the system was controlled by LBMPC.  Also, the L2NW only used three states $X_i = [\Phi_i \ \Psi_i \ u_i]$ to estimate $g(x,u)$; incorporation of such prior knowledge improves estimation by reducing dimensionality.

The significance of this setup is that the assumptions of Theorems \ref{theorem:robustmpc} and \ref{theorem:ras} (via Proposition \ref{proposition:lincost}) are satisfied.  This means that for both linear MPC and LBMPC: (a) constraints and feasibility are robustly maintained despite modeling errors, (b) closed-loop stability is ensured, and (c) control is ISS with respect to modeling error.  In the instances we simulated, the controllers demonstrated these features.  More importantly, this example shows that the conditions of our deterministic theorems can be checked easily for interesting systems such as this.

Simulation results are shown in Fig. \ref{fig:mg_plot}: LBMPC converges faster to the operating point than linear MPC, but requires increased  computation at each step ($0.3s$ for linear MPC vs. $0.9s$ for LBMPC).  Interestingly, LBMPC performs as well as nonlinear MPC, but nonlinear MPC only requires $0.4s$ to compute each step.  However, our point is that LBMPC does not require the control engineer to model nonlinearities, in contrast to nonlinear MPC.  Our code was written in MATLAB and uses the SNOPT solver \cite{snopt2005} for optimization; polytope computations used the Multi-Parametric Toolbox (MPT) \cite{mpt}.

\begin{figure}[t!]
  \begin{center}
      \includegraphics{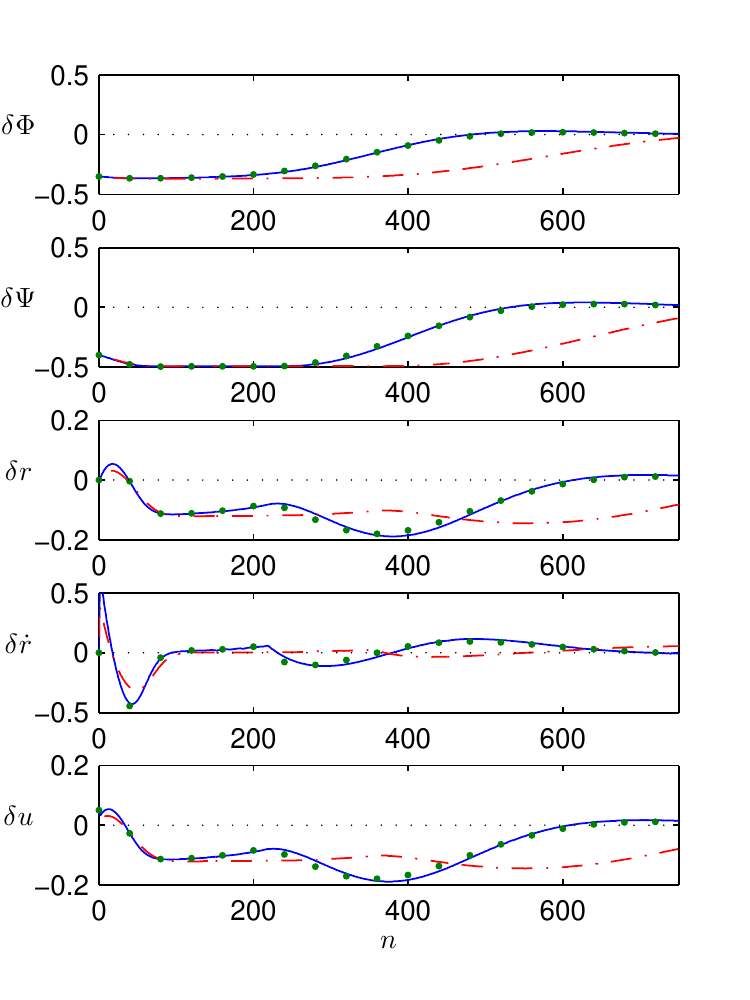}
    \caption{The states and control of LBMPC (solid blue), linear MPC (dashed red), and nonlinear MPC (dotted green) are shown.  LBMPC converges faster than linear MPC.}
    \label{fig:mg_plot}
  \end{center}
\end{figure}

\section{Conclusion}

LBMPC uses a linear model with bounds on its uncertainty to construct invariant sets that provide deterministic guarantees on robustness and safety.  An advantage of LBMPC is that many types of statistical identification tools can be used with it, and we constructed a new nonparametric estimator that has deterministic properties required for use with numerical optimization algorithms while also satisfying conditions required for robustness.  A simulation shows that LBMPC can improve over linear MPC, and experiments on testbeds \cite{aswani_proc,aswani_quad_2011,bouffard2011} show that such improvement translates to real systems.  

Amongst the most interesting directions for future work is the design of better learning methods for use in LBMPC.  Loosely speaking, nonparametric methods work by localizing measurements in order to provide consistent estimates of the function $g(x,u)$ \cite{zakai2008}.  The L2NW estimator maintains \textit{strict locality} in the sense of \cite{zakai2008}, because this property makes it easier to perform theoretical analysis.  However, it is known that learning methods that also incorporate global regularization, such as support vector regression \cite{smola2004,vapnik1999}, can outperform strictly local methods \cite{zakai2008}.  The design of such globally-regularized nonparametric methods which also have theoretical properties favorable for LBMPC is an open problem.

\begin{ack}                               
The authors would like to acknowledge Jerry Ding and Ram Vasudevan for useful discussions about collocation.  This material is based upon work supported by the National Science Foundation under Grant No. 0931843, the Army Research Laboratory under Cooperative Agreement Number W911NF-08-2-0004, the Air Force Office of Scientific Research under Agreement Number FA9550-06-1-0312, and PRET Grant 18796-S2.
\end{ack}

\bibliographystyle{plain}        
\bibliography{safe_mpc}           


\end{document}